\newtheorem{theorem}{Theorem}[section]
\newtheorem{lemma}[theorem]{Lemma}
\newtheorem{corollary}[theorem]{Corollary}
\newtheorem{proposition}[theorem]{Proposition}
\newtheorem{problem}[theorem]{Problem}
\theoremstyle{definition}
\theoremstyle{remark}
\numberwithin{equation}{section}
\begin{document}

\title{Non-minimal bridge positions of torus knots are stabilized}

\author{Makoto Ozawa}
\address{Department of Natural Sciences, Faculty of Arts and Sciences, Komazawa University, 1-23-1 Komazawa, Setagaya-ku, Tokyo, 154-8525, Japan}
\curraddr{Department of Mathematics and Statistics, The University of Melbourne
Parkville, Victoria 3010, Australia (temporary)}
\email{w3c@komazawa-u.ac.jp, ozawam@unimelb.edu.au (temporary)}

\subjclass[2000]{Primary 57M25; Secondary 57Q35}



\keywords{torus knot, bridge decomposition, bridge position, bridge presentation, stabilization, stabilized, unstabilized, stably equivalent}

\begin{abstract}
We show that any non-minimal bridge decomposition of a torus knot is stabilized and that $n$-bridge decompositions of a torus knot are unique for any integer $n$.
This implies that a knot in a bridge position is a torus knot if and only if there exists a torus containing the knot such that it intersects the bridge sphere in two essential loops.
\end{abstract}

\maketitle

\section{Introduction}

Throughout this paper we work in the piecewise linear category.

Knot theory treats all embeddings of the circle $S^1$ into the 3-sphere $S^3$ up to equivalence relation by orientation preserving homeomorphisms of $S^3$, and it is most fundamental and important problem to determine whether given two knots are equivalent and furthermore to describe how one can be deformed to another.

In \cite{Sc}, Schubert introduced the bridge number of knots, that is, the half of minimal number of intersection of a 2-sphere $S$ with a knot $K$ which cuts the pair $(S^3,K)$ into two trivial tangles $(B_1,T_1)$ and $(B_2,T_2)$.
Since a knot is trivial if and only if it has the bridge number $1$, the bridge number measures some complexity of knots.

Two bridge decompositions $(B_1,T_1)\cup_{S}(B_2,T_2)$ and $(B_1',T_1')\cup_{{S}'}(B_2',T_2')$ of a pair $(S^3,K)$ are said to be {\em equivalent} if there exists an orientation preserving homeomorphism of $S^3$ which sends one to another.
Since we may assume that $S={S}'$, $B_1=B_1'$ and $B_2=B_2'$, this condition can be replaced by the following condition: there exist homeomorphisms $f_1$ of $B_1$ and $f_2$ of $B_2$ such that $f_1(T_1)=T_1'$, $f_2(T_2)=T_2'$ and $f_1|_{S}=f_2|_{S}$.
Therefore, the bridge decomposition has a merit in a sense that it can decompose an orientation preserving homeomorphism of $S^3$ into two homeomorphisms of $B_1$ and $B_2$ if the knot admits a unique $n$-bridge decomposition.

In exchange for this, bridge decompositions of a knot $K$ are not unique generally even if it has the minimal bridge number.
For example, Birman (\cite{B}) and Montesinos (\cite{Mo}) gave an example of composite knots and prime knots respectively which admit at least two equivalence classes of minimal bridge decompositions.
The following theorem seems to be well-known since it can be proved by showing that Reidemeister moves (\cite{AB}, \cite{R}) can be achieved by bridge isotopies and stabilizations (this has been also pointed out by Joel Hass).

\begin{theorem}[well-known]
All bridge decompositions of a knot are stably equivalent.
\end{theorem}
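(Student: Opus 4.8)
The plan is to reduce the statement to the classical theorem of Reidemeister and Alexander--Briggs (\cite{R}, \cite{AB}), that any two diagrams of a fixed knot are related by a finite sequence of planar isotopies and the moves R1, R2, R3, and then to verify that each of these elementary operations lifts to a manipulation of bridge decompositions built only from bridge isotopies and stabilizations. Because an orientation preserving homeomorphism of $S^3$ is isotopic to the identity, the equivalence of bridge decompositions introduced above coincides with ambient isotopy of the pair $(S^3,K)$, so Reidemeister's theorem may be applied without loss.

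First I would fix a Morse height function $h\colon S^3\to[-1,1]$ whose regular level sets are the $2$-spheres parallel to the bridge sphere $S=h^{-1}(0)$, and record the dictionary between bridge positions and diagrams. Isotoping $K$ so that $h|_K$ is Morse with every maximum lying in the upper ball $B_1$ and every minimum in the lower ball $B_2$, the over-bridges become the components of $T_1$, the under-bridges the components of $T_2$, and projecting $K$ along the level spheres produces a diagram $D$ in which the number of maxima equals the number $n$ of bridges of the decomposition; moreover two bridge positions recording the same diagram up to Morse-preserving isotopy are bridge isotopic. Thus the two given bridge decompositions of $K$ determine two diagrams $D$ and $D'$ of the same knot, and it remains to transport a Reidemeister sequence joining $D$ to $D'$ back to the bridge decompositions while keeping all maxima above all minima at each stage.

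The core of the argument is the move-by-move realization. A planar isotopy that preserves the critical data of $h|_K$ is, by construction, a bridge isotopy. The moves R2 and R3 change neither the knot nor the number of critical points: after a bridge isotopy brings the participating arcs adjacent to $S$, the strand slides they effect may be carried out sideways inside a collar of $S$, hence again as bridge isotopies. The move R1, which raises the number of maxima of the diagram by one, is realized by an elementary stabilization --- inserting a trivial maximum--minimum finger in a collar of $S$ --- followed by a bridge isotopy winding that finger once around its own strand to produce the kink; its inverse is the corresponding destabilization. Finally, a planar isotopy that alters the critical data by carrying a maximum below a minimum, and so would destroy the bridge condition, is absorbed by stabilizing first so that the offending passage takes place among trivial bridges clustered near $S$ and can be undone after a compensating destabilization. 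Chaining these realizations along the Reidemeister sequence exhibits the two decompositions as connected by bridge isotopies, stabilizations, and destabilizations, which is precisely stable equivalence.

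The step I expect to be the main obstacle is exactly this control of the Morse, i.e.\ bridge, structure throughout: a Reidemeister move performed at an arbitrary place in the diagram will in general create a maximum sitting below an existing minimum and thereby break the bridge condition. The device resolving it is to stabilize enough in advance that every elementary move is executed in a collar of $S$, where any interleaving of critical points can be undone by a bridge isotopy, and then to destabilize. Verifying carefully that these local modifications really are compositions of the permitted operations, and that no global obstruction accumulates as one proceeds along the sequence, is the part of the proof that demands the most attention.
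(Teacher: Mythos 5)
Your proposal is correct and follows exactly the route the paper indicates for this theorem: the paper offers no detailed proof, only the one-sentence remark that the result follows by showing Reidemeister moves can be achieved by bridge isotopies and stabilizations, and your argument is a fleshed-out version of that same strategy. The point you single out as the main obstacle (stabilizing in advance so that a move never forces a maximum below a minimum) is indeed the substantive content hidden in the paper's remark.
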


This theorem says that any two bridge decompositions are equivalent after some stabilizations.
(The term ``stabilization'' in \cite{B}, \cite{O1}, etc is exchange for ``perturbation'' in \cite{ST}, \cite{T}, etc. since they studied a generalized bridge decomposition of knots in a 3-manifold and the term``stabilization'' has been already used for Heegaard splittings.)
This result corresponds to Reidemeister-Singer's theorem \cite{R}, \cite{S}: all Heegaard splittings of a closed orientable 3-manifold are stably equivalent.
And also, this is compared to Markov theorem \cite{M}: all closed braid representatives of a knot are stably equivalent.

\begin{figure}[htbp]
	\begin{center}
	\includegraphics[trim=0mm 0mm 0mm 0mm, width=.7\linewidth]{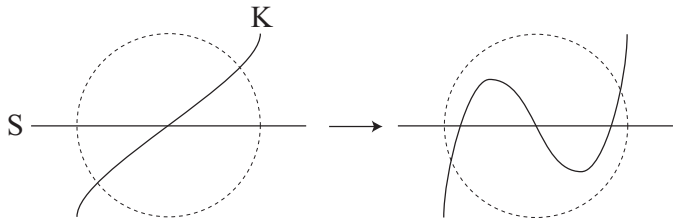}
	\end{center}
	\caption{Stabilization (Perturbation)}
	\label{stabilization}
\end{figure}

Toward the unknotting problem, Otal (later Hayashi and Shimokawa, the author) showed the following theorem.

\begin{theorem}[\cite{O1}, \cite{HS}, \cite{OZ}]\label{Otal}
Any non-minimal bridge decomposition of the trivial knot is stabilized.
\end{theorem}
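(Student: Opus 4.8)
The plan is to exploit that the trivial knot $K$ bounds an embedded disk and to read a cancelling pair of bridge disks off the intersection of that disk with the bridge sphere. Let $(B_1,T_1)\cup_S(B_2,T_2)$ be the given $n$-bridge decomposition with $n\geq 2$, and let $D$ be a disk with $\partial D=K$. First I would put $D$ in general position with respect to $S$ and isotope it, rel $K$, so as to minimize the number of components of $D\cap S$. Each circle component bounds a subdisk of $D$ whose interior misses both $S$ and $K$ and therefore lies entirely in $B_1$ or $B_2$; using that $T_1$ and $T_2$ are trivial tangles, an innermost such circle is removed by a standard innermost-disk isotopy, so after minimization I may assume $D\cap S$ consists only of arcs, and since $K$ meets $S$ in $2n\geq 4$ points there is at least one such arc.

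These arcs are disjoint and properly embedded in the disk $D$, so they cut $D$ into regions whose adjacency graph is a tree, the regions being two-coloured by the side $B_1$ or $B_2$ in which they lie, with adjacent regions receiving opposite colours. An outermost region is a bigon bounded by a single arc $\alpha\subset S$ and a subarc $\beta\subset K$; since the interior of $\beta$ misses $S$, the arc $\beta$ is exactly one strand of $T_1$ or $T_2$, and the region itself is a bridge disk for that strand (its interior lies on one side and meets $K$ only in $\beta$). Thus every leaf of the tree supplies a bridge disk, and the tree has at least two leaves.

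The heart of the matter is to promote two of these bridge disks to a \emph{cancelling} pair, namely a bridge disk $D_1$ for a strand of $T_1$ and a bridge disk $D_2$ for a strand of $T_2$ that share exactly one endpoint on $K$ and have disjoint interiors; such a pair exhibits the decomposition as stabilized in the sense of Figure~\ref{stabilization}. The main obstacle is precisely this step: outermost regions produce bridge disks in abundance, but a priori their arcs on $S$ may be linked and two disks on opposite sides need not meet $K$ in a single point. To force a genuine cancellation I would run a global minimality argument on $D\cap S$ in tandem with the tree structure, locating two leaf regions of opposite colour that are incident along $K$ at a common puncture and using minimality to prevent their boundary arcs on $S$ from interfering. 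Equivalently, one can organize the whole argument around the dichotomy ``stabilized versus weakly incompressible'' for the bridge sphere and rule out weak incompressibility using $D$, whose outermost subdisks on the two sides would otherwise furnish disjoint cut disks contradicting weak incompressibility.

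An alternative route, which isolates the same difficulty more structurally, passes to the knot exterior: the $n$-bridge decomposition induces a genus-$n$ Heegaard splitting of the solid torus $E(K)$, and the classification of Heegaard splittings of the solid torus shows that every splitting of genus at least $2$ is a stabilization of the genus-$1$ splitting. Translating this Heegaard stabilization back into a perturbation of the bridge strands then yields the conclusion; here the real work lies in matching the two notions of stabilization and in checking that the Heegaard stabilization can be chosen compatibly with $K$.
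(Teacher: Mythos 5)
The paper does not prove this theorem; it is quoted from \cite{O1}, \cite{HS}, \cite{OZ}, so the only internal point of comparison is the author's analogous argument for torus knots (Theorem \ref{torus}), which proceeds by putting the surface containing $K$ into essential Morse bridge position and splitting into weakly reducible and strongly irreducible cases. Your proposal is a different, more naive route, and it has a genuine gap exactly where you flag ``the heart of the matter.'' The preliminary steps are fine: after removing circles, $D\cap S$ is a nonempty family of disjoint arcs cutting $D$ into a tree of regions, and each outermost region is a bridge disk. But the promotion to a cancelling pair cannot work in the way you describe. Since $D$ is embedded and the arcs of $D\cap S$ are disjoint and properly embedded, each of the $2n$ points of $K\cap S$ lies on exactly one arc of $D\cap S$; an outermost region is bounded by a single arc $\alpha\subset S$ and a single strand $\beta\subset K$ whose endpoints are the endpoints of $\alpha$. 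Hence two distinct outermost regions have strands with no common endpoint: the configuration you want --- two leaf regions of opposite colour meeting at a common puncture --- literally cannot occur. A cancelling pair must therefore pair an outermost bigon with a bridge disk for the \emph{adjacent} strand on the other side, and that second disk is not supplied by $D$ at all; constructing it is precisely the content of the theorem.

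The fallback via weak reducibility is also incomplete as stated: the two-colouring of the tree does not guarantee outermost regions on \emph{both} sides of $S$ (a path with an even number of edges has both leaves the same colour, e.g.\ the $n=2$ case with regions coloured $B_1$--$B_2$--$B_1$), so you cannot always extract disjoint compressing disks in $B_1-T_1$ and $B_2-T_2$ from $D$. If you could establish weak reducibility, then Theorem \ref{WR} plus the absence of essential meridional surfaces in a solid torus would indeed finish the argument --- this is exactly the structure of the paper's proof for torus knots --- but the strongly irreducible case is where Otal, Hayashi--Shimokawa and Ozawa each deploy real machinery (a graph/sweep-out argument, thin position, or essential Morse position of the sphere containing $K$ as in Lemma \ref{essential saddle}). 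The Heegaard-splitting alternative has the same status: translating a stabilization of the induced genus-$n$ splitting of the solid torus back into a perturbation of the bridge sphere is a known subtle point, not a routine check. So the proposal correctly identifies where the difficulty sits but does not overcome it.
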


This theorem says that any two $n$-bridge decompositions of the trivial knot are equivalent since the trivial knot admits a unique 1-bridge decomposition.
Thus, a knot $K$ with an $n$-bridge decomposition $(B_1,T_1)\cup_{S}(B_2,T_2)$ is trivial if and only if there exists a 2-sphere $F$ containing $K$ such that $F\cap S$ consists of a single loop (\cite{OZ}).
This result corresponds to Waldhausen's theorem \cite{W}: any non-minimal Heegaard splitting of the 3-sphere is stabilized, and by Alexander's theorem \cite{A}, any two genus $n$ Heegaard splittings are equivalent.

Furthermore, Otal (later Scharlemann and Tomova) showed the following theorem.

\begin{theorem}[\cite{O2}, \cite{ST}]\label{2-bridge}
Any non-minimal bridge decomposition of a 2-bridge knot is stabilized.
\end{theorem}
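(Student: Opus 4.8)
The plan is to pass to the double branched cover and convert the statement into one about Heegaard splittings of lens spaces. Write the given non-minimal decomposition as $(B_1,T_1)\cup_Q(B_2,T_2)$ with bridge number $n\ge 3$, and let $\pi\colon \Sigma\to S^3$ be the double cover of $S^3$ branched over the $2$-bridge knot $K$, with covering involution $\tau$. Since $K$ is $2$-bridge, $\Sigma$ is a lens space $L$. Because each $T_i$ is a trivial $n$-string tangle, its preimage $V_i=\pi^{-1}(B_i)$ is a handlebody of genus $n-1$, and $\widetilde Q=\pi^{-1}(Q)$ is a $\tau$-invariant Heegaard surface splitting $L$ as $V_1\cup_{\widetilde Q}V_2$ of genus $n-1\ge 2$. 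A perturbation of the bridge decomposition lifts exactly to a stabilization of this Heegaard splitting, so it suffices to destabilize $\widetilde Q$ \emph{equivariantly}. Invoking the theorem of Bonahon and Otal, that a lens space has a unique Heegaard splitting in each genus and that every splitting of genus $\ge 2$ is a stabilization of the genus-one splitting, produces an ordinary cancelling pair of meridian disks $D_1\subset V_1$, $D_2\subset V_2$ with $\abs{\partial D_1\cap\partial D_2}=1$. The trouble is that this pair is only guaranteed abstractly and need not be carried to itself by $\tau$, hence need not descend downstairs.

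The heart of the argument---and the step I expect to be the main obstacle---is therefore to upgrade this to an \emph{equivariant} destabilization: to produce compressing disks $\widetilde D_1\subset V_1$, $\widetilde D_2\subset V_2$ meeting $\widetilde Q$ in a single transverse point and invariant, as a pair, under $\tau$, so that $D_i=\pi(\widetilde D_i)$ are bridge disks in $B_i$ with $\abs{\partial D_1\cap\partial D_2}=1$, exhibiting $Q$ as perturbed. Here I would run an equivariant version of the weak-reduction machinery: make a complete meridian system of $V_1$ invariant, apply an equivariant normal-surface normalization so that the compressions respect $\tau$, and then feed the invariant splitting into an equivariant Rubinstein--Scharlemann sweepout comparing $\widetilde Q$ with the invariant genus-one Heegaard torus. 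The delicate points are controlling the fixed-point set, namely the lifted branch arcs, throughout every isotopy, and ruling out the non-invariant cancelling configurations; this is precisely where the analysis must be carried out by hand rather than quoted.

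If the equivariant promotion proves too costly, the fallback is a direct, downstairs argument paralleling the trivial-knot case of Theorem~\ref{Otal}: compare the non-minimal bridge sphere $Q$ with a minimal $2$-bridge sphere $P$ by a sweepout, study the Rubinstein--Scharlemann graphic of the pair relative to $K$, and locate a region of the square in which a level copy of $P$ meets $Q$ in a pattern forcing a cancelling pair of bridge disks on opposite sides of $Q$. This route avoids the equivariance question altogether, but replaces it with a lengthy combinatorial case analysis of the graphic, which would then be the corresponding obstacle.
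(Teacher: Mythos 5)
First, note that the paper does not prove Theorem~\ref{2-bridge}: it is quoted from \cite{O2} and \cite{ST}, and the paper's own argument (Section 3) concerns torus knots only. So there is no internal proof to compare yours against; I can only judge the proposal on its own terms.

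On those terms there is a genuine gap, and you have in effect named it yourself. Passing to the double branched cover and citing Bonahon--Otal \cite{BO} buys you only a \emph{non-equivariant} cancelling pair; the implication you actually need --- ``the lifted Heegaard splitting is stabilized, therefore the bridge decomposition downstairs is perturbed'' --- is precisely the equivariant destabilization problem, and that is not a technical refinement of the covering-space picture but the entire content of the theorem. It does not follow from general principles: an invariant stabilized Heegaard surface need not carry an invariant cancelling pair, which is exactly why the analogous ``reduction'' of Theorem~\ref{Otal} to Waldhausen's theorem \cite{W} (the double branched cover of the unknot being $S^3$) does not constitute a proof, and why \cite{O1}, \cite{HS}, \cite{OZ} all argue directly with the bridge sphere downstairs. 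Your paragraph beginning ``Here I would run an equivariant version of the weak-reduction machinery\dots'' lists tools one might try but contains no argument: nothing controls the branch locus (the lifted arcs of $K$) through the compressions, and nothing rules out that every cancelling pair is displaced by $\tau$. The fallback you describe --- a sweepout/graphic comparison of the given bridge sphere with a minimal one, carried out in $S^3$ relative to $K$ --- is in substance the route actually taken in \cite{O2} and \cite{ST}, but as written it likewise defers the entire case analysis. So the proposal correctly locates where the difficulty sits, but neither of its two routes gets past it.
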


This theorem says that any two $n$-bridge decompositions of a 2-bridge knot are equivalent up to reflection since a 2-bridge knot admits at most two 2-bridge decompositions and they are related by the reflection with respect to the bridge decomposing 2-sphere.
This result corresponds to Bonahon-Otal's theorem \cite{BO}: any non-minimal Heegaard splitting of a lens space is stabilized.

In this paper, we show the following theorem.

\begin{theorem}\label{torus}
Any non-minimal bridge decomposition of a torus knot is stabilized.
\end{theorem}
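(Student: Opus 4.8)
The plan is to begin with a non-minimal bridge decomposition $(B_1,T_1)\cup_{S}(B_2,T_2)$ of a $(p,q)$-torus knot $K$ and to produce either a canceling pair of bridges, so that the decomposition is stabilized, or else a contradiction. Let $T$ be the standard torus on which $K$ lies, bounding solid tori $V_1$ and $V_2$ on its two sides. If the bridge sphere $S$ is already stabilized we are done, so I would assume throughout that $S$ is unstabilized and meridionally incompressible: a meridional compression of $S$ would exhibit $K$ as a nontrivial connected sum or as a split union, which is impossible since torus knots are prime and nontrivial. Under this assumption the standard machinery underlying Theorems \ref{Otal} and \ref{2-bridge} tells us that $S$ is \emph{strongly irreducible} in the exterior of $K$, in the sense that there is no pair of disjoint compressing disks for $S$ lying on opposite sides of it.

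The core of the argument is to put $S$ into good position with respect to $T$. First I would fix $K$ and $S$ and isotope $T$ rel $K$ so as to minimize $\abs{S\cap T}$; the resulting intersection is then a collection of loops lying on $S$ and missing $K$. Since every loop on the sphere $S$ is separating, an innermost such loop bounds a disk $D\subset S$ meeting $T$ only in $\partial D$, with $D$ lying in one of $V_1,V_2$. If $\partial D$ is inessential on $T$ it can be removed by an isotopy, contradicting minimality of $\abs{S\cap T}$; if $\partial D$ is essential on $T$ then $D$ is a meridian disk of the solid torus containing it. I would then use the strong irreducibility of $S$ together with the incompressibility of $T$ in the exterior of $K$ to show that these meridian disks cannot be cancelled against one another, and so to whittle $S\cap T$ down to exactly two essential loops. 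At that stage $S$ meets $T$ in two parallel essential curves, cutting $T$ into two annuli each carrying a subfamily of the arcs of $K$, and a direct count shows that $K$ is then presented with the minimal number $\min(p,q)$ of bridges, contradicting non-minimality and completing the proof.

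The main obstacle is precisely this reduction of $S\cap T$ to two essential loops. Controlling the loops requires a delicate interplay between the strong irreducibility of $S$ and the Seifert-fibered geometry of the torus-knot exterior: one must rule out the various configurations of meridian disks of $V_1$ and $V_2$ that could a priori obstruct the reduction, and one must verify that the arcs of $K$ cut off on the annular pieces of $T$ form genuinely trivial tangles rather than knotted ones. This is where a sweep-out or graphic argument in the style of Rubinstein--Scharlemann, adapted to the essential torus $T$, does the heavy lifting, and where the special structure of torus knots, namely the uniqueness of the incompressible torus and the classification of essential annuli in a Seifert fibered space, becomes indispensable.
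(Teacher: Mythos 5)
Your endgame is where the argument breaks. You claim that once $S$ meets the torus $T$ in two essential loops, ``a direct count shows that $K$ is then presented with the minimal number $\min(p,q)$ of bridges, contradicting non-minimality.'' This is false, and the paper's proof makes clear why: when $|S\cap T|=2$, $T$ is cut into two annuli, and the arcs of $K$ in each annulus may be essential arcs running in the $q$-direction rather than the $p$-direction, which yields a $q$-bridge (hence non-minimal) decomposition; they may also be inessential in the annuli. So reaching the two-essential-loop configuration gives no contradiction --- indeed the paper's Corollary asserts that \emph{every} bridge decomposition of a torus knot, of any bridge number, realizes this configuration. The paper must therefore do genuine work in this terminal case: if some arc is inessential in its annulus it exhibits a cancelling pair directly, and if all arcs are essential it exhibits an explicit sequence of $(q-p)$ destabilizations taking the $q$-bridge presentation to the $p$-bridge presentation (Figure \ref{torus knot}). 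Your proposal has no counterpart to this step, and without it the proof cannot close.

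There are two further problems upstream. First, since $K\subset T$ and $K$ meets $S$ in $2n$ points, the curves of $S\cap T$ cannot ``miss $K$'': they necessarily pass through all $2n$ points of $K\cap S$, so an innermost loop on $S$ may bound a disk containing points of $K$, and the clean innermost-disk/meridian-disk dichotomy you describe does not apply as stated. The paper sidesteps this by working Morse-theoretically with the pair $(F,K)$ (Lemma \ref{essential saddle}), decomposing $F$ into $1$-bridge annuli $A_1,\dots,A_n$ and analyzing adjacent pairs as normal, turning, or Reeb (Lemma \ref{two cases}); in the case $|F\cap S|\ge 4$ it does not reduce the intersection at all, but instead produces a stabilization directly, either from a core loop of a solid torus that is a non-minimally bridged unknot (invoking Theorem \ref{Otal}) or from a turning pair via $\partial$-compressing disks (Lemma \ref{essential arcs}). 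Second, the reduction of $S\cap T$ to two essential loops, which you acknowledge as ``the main obstacle'' and defer to an unspecified sweep-out argument, is precisely the substantive content that a proof must supply; as written the proposal identifies the difficulty but does not resolve it.
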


We remark that $b(K)=\min \{p,q\}$ for a $(p,q)$-torus knot $K$ $(p,q>0)$, where $b(K)$ denotes the bridge number of $K$.
This was proved by Schubert \cite{Sc} and later Schultens \cite{Sch}, the author \cite{OZ}.
Furthermore, $n$-bridge decompositions of a torus knot are unique for any integer $n$ since a torus knot admits a unique minimal bridge decomposition.

\begin{corollary}
A knot $K$ with an $n$-bridge decomposition $(B_1,T_1)\cup_{S} (B_2,T_2)$ is a torus knot if and only if there exists a torus $F$ containing $K$ such that $F\cap S$ consists of two essential loops.
\end{corollary}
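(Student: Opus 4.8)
The statement is an equivalence, so I would prove the two implications separately, noting that only the forward one needs Theorem~\ref{torus}. For the backward implication, suppose $F$ is a torus with $K\subset F$ and $F\cap S=c_1\cup c_2$, two loops that are essential on $F$. Two disjoint essential simple closed curves on a torus are parallel, so $c_1,c_2$ cut $F$ into two annuli $A_1\subset B_1$ and $A_2\subset B_2$, while on the sphere $S$ one of them, say $c_1$, bounds a disk $Q\subset S$ whose interior is disjoint from $F$. Pushing $Q$ slightly off $S$ produces a compressing disk for $F$, and since $c_1$ is essential on $F$ this compression turns $F$ into a $2$-sphere, which bounds a ball by irreducibility of $S^3$; because the tube recovering $F$ from that sphere can be taken in a neighborhood of $Q\subset S$, the torus $F$ is standardly embedded. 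A knot lying on a Heegaard torus is a torus knot (the unknot being the degenerate case), so $K$ is a torus knot. This direction rests only on irreducibility of $S^3$ and the structure of curves on the sphere and on the torus.

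For the forward implication I would use the uniqueness asserted after Theorem~\ref{torus}: a torus knot has a unique minimal bridge decomposition, and by Theorem~\ref{torus} every non-minimal one is a stabilization, so for each $n$ all $n$-bridge decompositions of a fixed torus knot $K$ are equivalent. Such an equivalence is realized by an orientation preserving homeomorphism of $S^3$ carrying $S$ to itself, which sends a torus meeting $S$ in two essential loops to another such torus. It therefore suffices to exhibit, for each $n\ge b(K)$, a single $n$-bridge position of $K$ in which $K$ lies on a torus $F$ with $F\cap S$ equal to exactly two essential loops; the homeomorphism then transports $F$ to the given position.

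To build these models I would keep $K$ on its standard Heegaard torus $T$ as a $(p,q)$-curve and vary the \emph{bridge sphere} rather than the knot, choosing $S$ to meet $T$ in two parallel essential curves of a slope $\sigma$, so that $F:=T$ meets $S$ in exactly two essential loops and $K\cap B_i$ consists of spanning arcs of the annulus $T\cap B_i$. Since $T$ is standard these annuli are boundary-parallel and the spanning arcs are trivial, so this is an honest bridge position whose bridge number equals the geometric intersection number of $\sigma$ with the $(p,q)$-slope; tilting $S$ is exactly what avoids the spurious intersection loops one would get by finger-pushing the knot off $T$. The main work, and the step I expect to be most delicate, is verifying that as $\sigma$ ranges over admissible slopes this intersection number realizes every integer $n\ge b(K)=\min\{p,q\}$ while the arcs stay trivial, so that each model is a genuine $n$-bridge position. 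Once that is known, uniqueness (hence Theorem~\ref{torus}) guarantees it is \emph{the} $n$-bridge position up to equivalence, and the corollary follows. The only real subtlety is that raising the crossing count must not destroy triviality of the arcs, which is precisely the phenomenon behind Schubert's equality $b(K)=\min\{p,q\}$.
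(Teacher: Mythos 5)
Your backward implication has a genuine gap at the sentence ``because the tube recovering $F$ from that sphere can be taken in a neighborhood of $Q\subset S$, the torus $F$ is standardly embedded.'' Compressing $F$ along $Q$ shows only that the side of $F$ containing $\mathrm{int}\,Q$ is a ball plus a $1$-handle, i.e.\ a solid torus; it says nothing about the other side, and the tube being geometrically small does not help. If $F=\partial N(J)$ for a knotted $J$ and $Q$ is a meridian disk, the arc dual to the tube is a tiny arc $\alpha$ in the complementary ball, yet the complement of $N(\alpha)$ in that ball is $E(J)$, not a solid torus, so the ball--arc pair is knotted and $F$ is not a Heegaard torus. This is not a repairable slip of wording: you state explicitly that this direction uses only irreducibility of $S^3$ and the curve configurations on $S$ and $F$, and under those hypotheses alone the implication is false. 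Take $K$ a nontrivial cable of a knotted $J$, lying on $F=\partial N(J)$, and let $S$ be the sphere assembled from two meridian disks of $N(J)$ and a push-off of one of the two annuli of $F$ between them: then $F\cap S$ is two loops essential in $F$, but $K$ is a satellite, not a torus knot. What breaks in that example is that one of the two tangles is not trivial, so the hypothesis that $(B_1,T_1)\cup_S(B_2,T_2)$ is a \emph{bridge} decomposition is exactly the ingredient your argument must use and never does. The two disk components of $S\setminus(c_1\cup c_2)$ do show that one side of $F$ is a solid torus with meridian $c_i$; one must then use the triviality of the tangles, whose strings lie in the annuli $F\cap B_i$, to force each annulus $F\cap B_i$ to be $\partial$-parallel in $B_i$, after which the two parallelism regions glue along the annulus of $S\setminus F$ to exhibit the other side of $F$ as a solid torus too.

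In the forward direction the reduction via Theorem \ref{torus} and uniqueness of $n$-bridge decompositions is the right strategy, but your model construction cannot work for general $n$. If a sphere $S$ meets the Heegaard torus $T$ in exactly two curves of slope $\sigma$ essential in $T$, the two disk components of $S\setminus T$ lie on the same side of $T$ (crossing each curve of $T\cap S$ switches sides) and are properly embedded disks in that solid torus with essential boundary, hence meridian disks; so $\sigma$ is forced to be the meridian of one of the two solid tori, and tilting $S$ only ever produces the $p$-bridge and $q$-bridge positions. The missing models for the remaining $n$ come from the move you set aside: stabilize by pushing a small finger of $K$ across $S$ \emph{while keeping $K$ on $T$}. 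This changes neither $T$ nor $S$ nor $T\cap S$, creates no spurious intersection loops, raises the bridge number by one each time, and therefore realizes every $n\ge p$ with $T\cap S$ still equal to two essential loops; uniqueness then transports $T$ to any given $n$-bridge decomposition. This is consistent with the case $|F\cap S|=2$ in the proof of Theorem \ref{torus}, where for non-minimal $n$ other than $q$ the strings are necessarily inessential in the annuli $F\cap B^{\pm}$.
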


It is clear that if a bridge decomposition of a knot is minimal, then it is unstabilized.
Theorem \ref{Otal}, \ref{2-bridge} and \ref{torus} show that the converse holds for the trivial knot, 2-bridge knots and torus knots respectively.

\begin{problem}
Is any non-minimal bridge decomposition of a knot stabilized?
\end{problem}

In this respect, Casson and Gordon \cite{CG} (later Kobayashi \cite{K}, Lustig and Moriah \cite{LM}, Moriah, Schleimer and Sedgwick \cite{MSS}) showed that there exists a 3-manifold which admits unstabilized Heegaard splittings of arbitrarily large genus.
Therefore it is expected that there exists a knot which admits non-minimal unstabilized bridge decompositions.

Theorem \ref{Otal} and \ref{torus} show that for any integer $n$, there exists a unique $n$-bridge decomposition of the trivial knot and a torus knot respectively, and Theorem \ref{2-bridge} shows that for any integer $n$, there exist at most two $n$-bridge decompositions of a 2-bridge knot.
Generally, Coward showed the following theorem.

\begin{theorem}[\cite{C}]
There exist only finitely many bridge decompositions of given bridge numbers for a hyperbolic knot.
\end{theorem}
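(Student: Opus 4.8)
The plan is to translate the statement into a question about Heegaard surfaces of the knot exterior and then exploit the hyperbolic structure together with normal surface theory. First I would replace a hyperbolic knot $K$ by its exterior $E(K)=S^3\setminus \mathrm{int}\,N(K)$, a compact orientable $3$-manifold with a single torus boundary. An $n$-bridge decomposition $(B_1,T_1)\cup_S(B_2,T_2)$ restricts to a properly embedded planar surface $P=S\cap E(K)$, a $2n$-punctured sphere, which cuts $E(K)$ into two compression bodies, namely the exteriors of the two trivial tangles. Thus an $n$-bridge decomposition is the same datum as a Heegaard splitting of $E(K)$ of a fixed bounded genus carrying a prescribed boundary pattern on the torus, and equivalence of bridge decompositions corresponds to ambient isotopy of these Heegaard surfaces combined with the action of the mapping class group. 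Hence the theorem reduces to showing that $E(K)$ admits only finitely many isotopy classes of such bridge Heegaard surfaces of a fixed genus.

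Next I would bring in hyperbolicity. Since $K$ is hyperbolic, $E(K)$ carries a complete finite-volume hyperbolic metric, which is unique by Mostow--Prasad rigidity; consequently the mapping class group of $E(K)$ is finite, so it suffices to bound the number of isotopy classes. I would fix a geometric ideal triangulation (or the canonical Epstein--Penner decomposition) $\mathcal T$ of $E(K)$ against which to run normal surface theory. A bridge Heegaard surface is in general compressible, so it cannot be normalized directly; instead I would put it into thin position and untelescope it, in the sense of Scharlemann--Thompson, into a generalized Heegaard splitting whose thin levels are incompressible and whose thick levels are strongly irreducible, all of Euler characteristic bounded in terms of $n$.

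The two families of pieces are then each finite up to isotopy. The incompressible thin levels are isotopic to normal surfaces, and by Kneser--Haken finiteness together with Haken's normal surface theory there are only finitely many isotopy classes of incompressible surfaces of bounded genus. The strongly irreducible thick levels can be isotoped to almost normal surfaces, by the theorem of Rubinstein and Stocking, and almost normal surfaces of bounded weight again fall into finitely many isotopy classes. Here the hyperbolic geometry supplies the needed a priori weight bound: a least-area surface in each piece has area, hence combinatorial weight in $\mathcal T$, controlled by its Euler characteristic, so only finitely many normal and almost normal surfaces can arise.

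Finally I would reassemble. A bridge surface is recovered from its thin decomposition by amalgamating the thick levels across the thin levels; since there are finitely many possible thin and thick levels and the genus is fixed, there are only finitely many ways to amalgamate them into a bridge surface, and each reassembly determines a single isotopy class. Together with the finiteness of the mapping class group, this yields finitely many equivalence classes of $n$-bridge decompositions. The main obstacle I anticipate is twofold: securing the uniform weight bound on the almost normal thick levels purely from the Euler-characteristic and area control, so that they genuinely lie among finitely many normal surfaces; and controlling the amalgamation step, so that the passage from the unordered collection of incompressible and strongly irreducible pieces back to the bridge surface involves only finitely many isotopy-invariant choices and does not silently introduce infinitely many distinct reassemblies.
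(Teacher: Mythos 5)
First, a point of order: the paper does not prove this statement at all. It is quoted, with citation, as Coward's theorem \cite{C}, purely as background for the discussion of uniqueness and finiteness of bridge decompositions; there is no proof in the paper to compare yours against. Judged on its own merits, your outline follows the natural modern strategy (pass to the knot exterior, untelescope into thin and thick levels, normalize each piece, count, reassemble), which is the right family of techniques, but it has a genuine gap at exactly the step that makes theorems of this type hard.

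The decisive gap is your finiteness count for the normalized pieces. You assert that a least-area representative has ``area, hence combinatorial weight in $\mathcal{T}$, controlled by its Euler characteristic,'' so that only finitely many normal and almost normal surfaces can arise. Area does not control normal weight: weight is a combinatorial quantity, not bounded by any geometric area bound, and a triangulated 3-manifold can contain infinitely many pairwise non-isotopic normal (or almost normal) surfaces of the same bounded Euler characteristic. This is precisely why the Waldhausen conjecture --- finiteness of Heegaard splittings of bounded genus for closed, irreducible, atoroidal manifolds --- resisted the naive ``normalize and count'' attack and required Li's measured-lamination and branched-surface machinery \cite{L}; the paper itself points to Li's theorem as the closed-manifold analogue of Coward's result, which should be a warning that no soft argument suffices. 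The same problem infects your amalgamation step: even granting finitely many thin and thick pieces, reassembly is not canonical, since gluings can differ by twists along the thin levels, and excluding infinitely many resulting isotopy classes again requires hyperbolicity (absence of essential tori and annuli) fed into an actual argument, not merely the remark that the genus is fixed. You flag both points yourself as ``anticipated obstacles,'' which is honest, but they are not side issues --- they are the content of the theorem. A correct proof must use the hyperbolic structure far more substantially than Mostow rigidity plus least-area control, as Coward's argument does by combining thin-position results for bridge surfaces with angled/geometric triangulation techniques in the spirit of Lackenby and Li.
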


This result corresponds to Li's settlement \cite{L} of the Waldhausen conjecture: there exist only finitely many Heegaard splittings of given Heegaard genus for a closed, orientable, irreducible and atoroidal 3-manifold.

In despite of this, Jang \cite{J} showed that there exists a 3-bridge link which admits infinitely many 3-bridge decompositions.
And also Sakuma \cite{Sa} (later Morimoto and Sakuma \cite{MS}, Bachman and Derby-Talbot \cite{BD}) showed that there exists a 3-manifold which admits infinitely many minimal genus Heegaard splittings.
We also remark that Jang \cite{J2} classified 3-bridge algebraic links and moreover, for non-Montesinos ones, she also given a classification of 3-bridge decompositions of each link.

\section{Definitions and related results}

\subsection{Three definitions of the bridge number}



Let $K$ be a knot in the 3-sphere $S^3$, and $F$ be a 2-sphere embedded in $S^3$ which intersects $K$ in $2n$ points $(n\ge1)$.
Then by the Alexander's theorem \cite{A}, $F$ decomposes $S^3$ into two 3-balls $B_1$ and $B_2$, and $K$ into two collections $T_1$ and $T_1$ of $n$ arcs.
Then the pair $(B_i,T_i)$ is called an {\em $n$-string tangle}, and we say that the pair $(S^3,K)$, or simply $K$, admits an {\em $n$-string tangle decomposition} $(B_1,T_1)\cup_F (B_2,T_2)$.
We say that a tangle $(B_i,T_i)$ is {\em trivial} if there exists a disk $D$ properly embedded in $B_i$ such that $D\supset T_i$, and that $K$ admits a {\em trivial $n$-string tangle decomposition}, or simply an {\em $n$-bridge decomposition}, $(B_1,T_1)\cup_F (B_2,T_2)$ if both of $(B_1,T_1)$ and $(B_2,T_2)$ are trivial.
The {\em bridge number} $b(K)$ is defined as the minimal number of $n$ such that $K$ admits an $n$-bridge decomposition.

There are at least two other definitions of the bridge number as follows.

First suppose that a knot $K$ is disjoint from $\pm\infty=(0,0,0,\pm 1)\in \Bbb{R}^4$ and let $p:S^3-\{\pm\infty\} \to S$ be a projection by regarding $S^3-\{\pm\infty\}$ as $S\times \Bbb{R}$.
Then we say that $p(K)$ is a {\rm regular projection} if it has only transverse double points, and a {\em regular diagram} $\tilde{K}$ of $K$ is a regular projection with an over/under information on its double points, namely {\em crossings}.
We say that a subarc of $\tilde{K}$ is an {\em over bridge} (resp. {\em under bridge}) if it contains only over crossings (resp. under crossings).
A partition $\tilde{K}=K_1^+\cup K_1^-\cup \cdots \cup K_n^+\cup K_n^-$ is called an {\em $n$-bridge presentation} of $K$ if $K_i^+$ is an over bridge and $K_i^-$ is an under bridge for $i=1,\ldots,n$.
The {\em bridge number} $b(K)$ is defined as the minimal number of $n$ such that $K$ admits an $n$-bridge presentation.

Next let $h:S^3\to \Bbb{R}$ be the standard Morse function, that is, the restriction of $\Bbb{R}^4=\Bbb{R}^3\times \Bbb{R}\to \Bbb{R}$ on $S^3$.
We may assume that $K$ is disjoint from $\pm\infty$ and $K$ has only finitely many critical points with respect to $h$.
We say that $K$ is in an {\em $n$-bridge position} if $K$ has just $n$ maximal points and $n$ minimal points, and all maximal (resp. minimal) values are greater (resp. less) than $0$.
Then the 2-sphere $S=h^{-1}(0)$ is called an {\em $n$-bridge sphere} for $K$.

It is well known that the above three definitions are equivalent.
We do not prove this here, but Scharlemann's survey \cite{S1} is a good reference.

\begin{proposition}\label{fact}
The following are equivalent.
\begin{enumerate}
\item A knot $K$ admits an $n$-bridge decomposition with a bridge sphere $S$.
\item A knot $K$ admits an $n$-bridge presentation with a bridge sphere $S$.
\item A knot $K$ admits an $n$-bridge position with a bridge sphere $S$.
\end{enumerate}
\end{proposition}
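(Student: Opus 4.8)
The plan is to prove the three statements equivalent by establishing the cyclic chain $(1)\Rightarrow(2)\Rightarrow(3)\Rightarrow(1)$, arranged so that all the genuine geometric content is concentrated in the single implication $(3)\Rightarrow(1)$; the other two steps are bookkeeping about the projection $p$ and about crossing data.

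For $(1)\Rightarrow(2)$, suppose $K$ admits a trivial $n$-bridge decomposition, so there are properly embedded disks $D_1\subset B_1$ and $D_2\subset B_2$ with $T_i\subset D_i$. I would isotope $T_1$ inside $B_1$, rel the endpoints $K\cap S$, so that it lies in a flat disk carried into $h>0$ and projecting injectively to $S$, and symmetrically push $T_2$ into a flat disk at negative height. Then $p(T_1)$ and $p(T_2)$ are each a family of $n$ pairwise disjoint arcs, every double point of $p(K)$ occurs between the two families, and at each such point the $T_1$-strand (at positive height) lies over the $T_2$-strand. After a generic perturbation making $p(K)$ a regular diagram, the $n$ arcs from $T_1$ are over bridges and those from $T_2$ are under bridges, so $K$ inherits an $n$-bridge presentation.

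For $(2)\Rightarrow(3)$, start from an $n$-bridge presentation $\tilde K=K_1^+\cup K_1^-\cup\cdots\cup K_n^+\cup K_n^-$ lying in $S=h^{-1}(0)$. The first observation is that no two over bridges can cross and no two under bridges can cross: at any crossing one strand is an over crossing and the other an under crossing, so a crossing of two over bridges would force an under crossing inside an over bridge. Hence every crossing is between an over bridge and an under bridge. I would then lift each over bridge slightly into $h>0$ and push each under bridge into $h<0$, keeping the $2n$ junction points on $S$; the crossing data is respected, so the result is isotopic to $K$. Finally, isotoping each over bridge to have a single maximum and each under bridge a single minimum produces $n$ maxima above $S$ and $n$ minima below it, i.e.\ an $n$-bridge position.

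For $(3)\Rightarrow(1)$, the crux, put $K$ in $n$-bridge position, let $B_1=h^{-1}[0,\infty)$ and $T_1=K\cap B_1$, so each arc of $T_1$ has exactly one maximum and no other critical point, and symmetrically for $T_2$. The claim is that such a tangle is trivial, that is, $T_1$ lies in a single properly embedded disk. For one arc this is immediate, since an arc with a unique maximum is boundary parallel; the real difficulty, and the main obstacle of the whole proposition, is to carry out the $n$ parallelisms simultaneously and disjointly. I expect to handle this by pushing the maxima down toward $S$ one at a time in order of height, or equivalently by an innermost-circle and outermost-arc argument on the parallelism disks, so as to isotope all arcs of $T_1$ into disjoint small bumps inside a collar $S\times[0,\epsilon]$; such bumps manifestly lie in a common disk. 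Applying the same argument in $B_2$ and gluing yields a trivial $n$-bridge decomposition, closing the cycle. Throughout, the genericity needed to make projections regular and to realize the Morse data is routine, and I would only sketch it, referring to Scharlemann's survey for the standard details.
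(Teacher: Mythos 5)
The paper does not actually prove Proposition~\ref{fact}: it states that the equivalence of the three definitions is well known, declines to prove it, and refers the reader to Scharlemann's survey \cite{S1}. So there is no in-paper argument to compare against; what you have written is the standard folklore proof, and in outline it is correct. Your cyclic scheme $(1)\Rightarrow(2)\Rightarrow(3)\Rightarrow(1)$ is the usual one, and you have correctly located the only step with real content, namely $(3)\Rightarrow(1)$: an individual arc with a single maximum is boundary-parallel, but the point is to realize all $n$ parallelisms disjointly. The standard way to make your sketch precise is a level-sphere sweep: between consecutive critical values the arcs meet each level sphere in a finite set of points paired off by the arcs lying above, and one chooses disjoint embedded arcs in each level sphere realizing the pairing, continuously in the level; the union of these level arcs gives disjoint descending disks, which is exactly the ``push the maxima down in order of height'' isotopy you describe. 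Two small completeness points you should not elide: first, the paper's definition of a trivial tangle asks for a \emph{single} properly embedded disk $D\subset B_i$ with $T_i\subset D$, so after the arcs are in disjoint bumps over disjoint shadow arcs in $S$ you must still connect those shadow arcs into one embedded arc in $S$ and take the resulting vertical disk (rounded to be properly embedded) — ``manifestly lie in a common disk'' is hiding this band-sum; second, in $(2)\Rightarrow(3)$ one should also rule out a bridge crossing \emph{itself}, which follows from the same observation you make for two distinct over bridges. Neither is a gap, just detail worth recording since the paper supplies none.
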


\subsection{On the connected sum of knots}

Let $K_i$ $(i=1,2)$ be a knot in $S^3$ which admits an $n_i$-bridge decomposition $(B_i^+,T_i^+)\cup_{F_i}(B_i^-,T_i^-)$.
Pick one point $p_i$ of $K_i\cap F_i$ and take a small regular neighborhood $C_i$ of $p_i$ so that the pair $(C_i,K_i\cap C_i)$ is a trivial 1-string tangle.
Then we obtain an $(n_1+n_2-1)$-bridge decomposition $((B_1^+-\text{int}C_1)\cup(B_2^+-\text{int}C_2), (T_1^+-\text{int}(K_1\cap C_1))\cup (T_2^+-\text{int}(K_2\cap C_2))) \cup_{(F_1-\text{int}(F_1\cap C_1)\cup(F_2-\text{int}(F_2\cap C_2))} ( (B_1^--\text{int}C_1)\cup(B_2^--\text{int}C_2), (T_1^--\text{int}(K_1\cap C_1))\cup (T_2^--\text{int}(K_2\cap C_2)))$ for a connected sum $K_1\# K_2$ of $K_1$ and $K_2$, and we denote it by $(B_1^+,T_1^+)\cup_{F_1}(B_1^-,T_1^-)\# (B_2^+,T_2^+)\cup_{F_2}(B_2^-,T_2^-)$.
Thus, we have $b(K_1\# K_2)\le b(K_1)+b(K_2)-1$.
Schubert (later Schultens) showed the converse inequality: $b(K_1\# K_2)\ge b(K_1)+b(K_2)-1$.
The proof involves the following theorem.

\begin{theorem}[\cite{Sc}, \cite{Sch1}, \cite{D}, \cite{HS2}]
For any $n$-bridge decomposition $(B_1,T_1)\cup_{S} (B_2,T_2)$ of a composite knot $K$, there exists a decomposing sphere $F$ for $K$ such that $F\cap S$ consists of a single loop.
\end{theorem}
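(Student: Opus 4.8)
The plan is to fix the bridge sphere $S$ and to look for the required decomposing sphere among \emph{all} decomposing spheres of $K$, choosing one that is as simple as possible relative to $S$. Since $K$ is composite it admits some decomposing sphere meeting $K$ transversally in two points; after a small isotopy I may assume it is transverse to $S$, so that the intersection is a finite disjoint union of circles. Among all decomposing spheres for $K$ that are transverse to $S$ and meet $K$ in two points, I choose $F$ minimizing the number $m=\abs{F\cap S}$ of intersection circles. First I record that $m\ge 1$: if $F\cap S=\emptyset$ then $F$ lies in one of the balls $B_1,B_2$, say $B_1$, together with both of its punctures; but $(B_1,T_1)$ is a \emph{trivial} tangle, so the summand of $K$ cut off by $F$ on that side would be trivial, contradicting that $K$ is composite. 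Thus $m\ge 1$, and the whole problem is to show that minimality forces $m=1$.

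The core of the argument is a sequence of innermost-disk reductions that, by minimality of $m$, must terminate with every circle of $F\cap S$ essential on both surfaces. Here $S$ is punctured by the $2n$ points $K\cap S$ and $F$ is punctured by the two points $K\cap F$. A parity observation does most of the bookkeeping: any sphere meets $K$ in an even number of points, so whenever a circle $c$ of $F\cap S$ bounds a disk on $S$ carrying $k$ punctures, the two disks it bounds on $F$ each carry a number of the two punctures of $F$ congruent to $k$ modulo $2$. Using this, suppose some $c$ is innermost on $S$ and bounds an \emph{unpunctured} disk $E\subset S$; by parity the disks on $F$ cut off by $c$ carry an even number of punctures, hence $c$ also bounds an unpunctured disk on $F$. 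Surgering $F$ along $E$ then yields a decomposing sphere of the same knot type meeting $K$ in the same two points but with strictly fewer intersection circles, contradicting minimality. Hence no circle bounds an unpunctured disk on $S$. The symmetric reduction for circles bounding an unpunctured disk on $F$ is where the triviality of the tangles enters: such an innermost disk lies in some $B_i$ and is disjoint from $T_i$, and since $(B_i,T_i)$ is trivial its arcs all lie on a single properly embedded disk $\Delta_i$; I would use $\Delta_i$ to guide an isotopy of $F$ that pushes the unpunctured disk across $S$ without ever meeting $K$, again lowering $m$. After these reductions every remaining circle is essential on both $S$ and $F$; in particular, on the twice-punctured sphere $F$ each such circle separates the two punctures, so all remaining circles are mutually parallel.

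It then remains to reduce the number of these essential, mutually parallel circles to one. If $m\ge 2$, two adjacent parallel circles cobound an annulus $A$ on $F$ with $A\cap K=\emptyset$, lying in one ball $B_i$; the triviality of $(B_i,T_i)$ together with the fact that the two boundary circles of $A$ are essential on $S$ should let me find a disk in $B_i$ along which $F$ can be isotoped so as to cancel this pair of circles, dropping $m$ by two and contradicting minimality. Thus $m=1$, and by construction the final $F$ still meets $K$ in two points and still separates $K$ into the two original nontrivial summands, so it is the desired decomposing sphere meeting $S$ in a single loop. The step I expect to be the main obstacle is precisely the elimination of circles that are inessential on $F$ but essential (puncture-separating) on $S$, together with the final cancellation of parallel essential circles: in both places one must convert combinatorial data on $S$ into an honest isotopy of $F$ inside the tangle $B_i$ while keeping $F\cap K$ equal to two points and keeping $F$ essential. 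Making these moves rigorous requires a careful use of the triviality disks $\Delta_i$ (for instance, an outermost-arc analysis of $\Delta_i\cap F$), and it is here that the hypothesis that both tangles are trivial is indispensable; the parity argument, by contrast, disposes of the unpunctured circles essentially for free.
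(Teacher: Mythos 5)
The paper does not prove this theorem; it is quoted with citations to Schubert, Schultens, Doll and Hayashi--Shimokawa, so there is no internal proof to compare against. Judged on its own merits, your proposal has a genuine gap, and it sits exactly where you suspected. Your opening reductions are fine: $m\ge 1$ because a trivial tangle contains no local knots, and the parity/linking-number argument correctly disposes of circles of $F\cap S$ that bound unpunctured disks on $S$ (the compression yields a sphere isotopic to $F$ in the complement of $K$ with fewer intersection circles). The problem is the ``symmetric'' reduction. Suppose $c\subset F\cap S$ bounds a disk $D\subset F$ with $D\cap K=\emptyset$, innermost on $F$, so $D\subset B_i$ with $D\cap T_i=\emptyset$, while $c$ is essential in $S-K$. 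Let $E\subset S$ be a disk with $\partial E=c$; then $D\cup E$ is a sphere in the ball $B_i$ bounding a ball $Q\subset B_i$, and since $c$ is essential in $S-K$ the disk $E$ carries at least two punctures, so $Q$ contains strands of $T_i$. Hence there is \emph{no} isotopy of $D$ across $S$ (in either direction) that avoids $K$: $D$ is an honest compressing disk for $S-K$ in $B_i-T_i$, and the triviality disk $\Delta_i$ cannot be used to remove it, since triviality of the tangle is perfectly compatible with the existence of such compressing disks. The same objection applies to your final step: an unpunctured annulus $A\subset F\cap B_i$ with essential boundary on $S-K$ cobounds, with the annulus $A'\subset S$, a region that in general contains strands of $T_i$, so the pair of circles cannot simply be cancelled by an isotopy of $F$.

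This is not a technicality that ``careful use of $\Delta_i$'' repairs; it is the entire content of the theorem. A minimal-intersection representative of the decomposing sphere can a priori get stuck in a position where every circle of $F\cap S$ is essential in $S-K$ but inessential in $F-K$, and no local move reduces $m$. The cited proofs do not argue by minimizing $\abs{F\cap S}$ over isotopy; they bring in global machinery --- Schubert's original combinatorial analysis, Schultens' width/thin-position argument, or Hayashi--Shimokawa's thin position for the pair $(S^3,K)$ --- precisely to rule out or untangle these configurations (compare the role of strong irreducibility and Theorem \ref{WR} elsewhere in this paper: the existence of disjoint compressing disks for $S-K$ on both sides is a substantive hypothesis with substantive consequences, not something one isotopes away). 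To make your outline into a proof you would need to replace the third and fourth reductions with an argument of that kind.
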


This theorem says that any bridge decomposition of a composite knot $K_1\# K_2$ can be decomposed into two bridge decompositions of $K_1$ and $K_2$.
This result corresponds to Haken's theorem \cite{H}: for any Heegaard splitting of a reducible closed orientable 3-manifold, there exists an essential 2-sphere which intersects the Heegaard surface in a sigle loop.
This shows that the Heegaard genus is additive under connected sum.
And also, this is compared to Birman-Menasco's theorem \cite{BM}: for any closed braid representatives of a composite knot $K$, there exists a decomposing sphere $F$ for $K$ such that $F$ intersects the braid axis in two points after some exchange moves.
Since an exchange move does not change the braid index, this shows that the braid index $(-1)$ is additive under connected sum.

\subsection{Stabilization and stably equivalent}
Now we define the {\em stabilization} and {\em stably equivalent} of bridge decompositions as follows.
Let $K_0$ be the trivial knot which admits a 2-bridge decomposition $(B_0^+,T_0^+)\cup_{F_0}(B_0^-,T_0^-)$.
It follows by Theorem \ref{Otal} that there exists a 2-sphere which contains the trivial knot and intersects $F_0$ in a single loop.
The {\em stabilization} of an $n$-bridge decomposition $(B^+,T^+)\cup_{F}(B^-,T^-)$ of a knot $K$ is an operation to obtain a new $(n+1)$-bridge decomposition $(B^+,T^+)\cup_{F}(B^-,T^-)\# (B_0^+,T_0^+)\cup_{F_0}(B_0^-,T_0^-)$ of $K\# K_0$ from $(B^+,T^+)\cup_{F}(B^-,T^-)$, and the resultant bridge decomposition is said to be {\em stabilized}.
See Figure \ref{stabilization}.
We say that a bridge decomposition is {\em unstabilized} if it is not stabilized.
Two bridge decompositions $(B_1^+,T_1^+)\cup_{F_1}(B_1^-,T_1^-)$ and $(B_2^+,T_2^+)\cup_{F_2}(B_2^-,T_2^-)$ are {\em stably equivalent} if they are equivalent after some stabilizations, namely, there exist non-negative integers $m,n$ such that $\displaystyle (B_1^+,T_1^+)\cup_{F_1}(B_1^-,T_1^-)\#^m (B_0^+,T_0^+)\cup_{F_0}(B_0^-,T_0^-)$ and $(B_2^+,T_2^+)\cup_{F_2}(B_2^-,T_2^-)\#^n (B_0^+,T_0^+)\cup_{F_0}(B_0^-,T_0^-)$ are equivalent.

\subsection{Weakly reducible bridge decompositions}

Casson and Gordon \cite{CG2} showed that if a Heegaard splitting of a closed orientable 3-manifold $M$ is weakly reducible, then either it is reducible or $M$ contains an essential surface.
This result was generalized by Hayashi and Shimokawa (later Tomova) to a 3-manifold containing a 1-manifold as follows.
A bridge decomposition $(B^+,T^+)\cup_{F}(B^-,T^-)$ of a knot $K$ is {\em weakly reducible} if there exists a pair of compressing disks $D^{\pm}$ for $F-K$ in $B^{\pm}-T^{\pm}$ such that $D^+\cap D^-=\emptyset$, and otherwise it is {\em strongly irreducible}.
A properly embedded surface $S$ in the exterior $E(K)$ of $K$ is an {\em essential meridional surface} if $\partial S$ consists of meridians of $K$ and $S$ is incompressible and $\partial$-incompressible in $E(K)$.
The following theorem has been reduced to bridge positions of a knot in $S^3$.

\begin{theorem}[\cite{HS2}, \cite{T}]\label{WR}
If a bridge position of a knot is weakly reducible, then either it is stabilized or the knot exterior contains an essential meridional surface.
\end{theorem}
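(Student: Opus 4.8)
The plan is to adapt the Casson--Gordon weak-reduction argument \cite{CG2} to the knot exterior, in the form developed by Hayashi--Shimokawa \cite{HS2} and Tomova \cite{T}. First I would pass to the exterior $E(K)$. Writing the bridge position as $(B^+,T^+)\cup_S(B^-,T^-)$, set $P=S\cap E(K)$; this is a $2n$-punctured sphere properly embedded in $E(K)$ whose boundary consists of meridians of $K$. Because $(B^\pm,T^\pm)$ are trivial tangles, the pieces $W^\pm=B^\pm\cap E(K)$ are compression bodies with $\partial_+W^\pm=P$ and $\partial_-W^\pm\subset\partial N(K)$, so that $P$ is a bridge (Heegaard) surface splitting $E(K)$ into $W^+\cup_P W^-$. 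In this language the hypothesis is exactly that $P$ is weakly reducible: there are disjoint compressing disks $D^+\subset W^+$ and $D^-\subset W^-$.

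Next I would untelescope. Following the weak-reduction / thin-position procedure, I would take maximal collections of pairwise disjoint, non-parallel compressing disks on the two sides and compress $P$ along them simultaneously. This produces a generalized Heegaard splitting of $E(K)$: an alternating sequence of thick surfaces $P_1,\dots,P_k$, each a strongly irreducible bridge surface of the submanifold it bounds, and thin surfaces $Q_1,\dots,Q_{k-1}$, each incompressible in $E(K)$. Since every surface involved is meridional and all compressing disks are disjoint from $K$, the thin surfaces are meridional surfaces; after the usual innermost-disk and outermost-arc cleanup I would also arrange them to be $\partial$-incompressible.

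The argument then splits into two cases according to the thin surfaces. If some $Q_j$ is essential---incompressible, $\partial$-incompressible, and neither $\partial$-parallel nor a trivial sphere or disk---then $Q_j$ is an essential meridional surface in $E(K)$ and I am in the second alternative. Otherwise every thin surface is inessential, i.e.\ a $\partial$-parallel annulus, a trivial disk, or a trivial sphere. In that degenerate case the generalized splitting amalgamates back to $P$ with no genuine thin level, and I would upgrade the disjoint disk pair $D^\pm$ into a cancelling pair of bridge disks: an upper bridge disk for $T^+$ and a lower bridge disk for $T^-$ meeting $S$ in arcs that share exactly one endpoint. Such a pair is precisely a stabilization, placing me in the first alternative.

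The hard part will be the degenerate case. Converting the collapse of the untelescoping into an honest cancelling pair---hence a genuine bridge-number-reducing stabilization rather than a mere isotopy of $S$---forces one to track the meridional boundary carefully: one must dispose of the $\partial$-parallel annuli among the thin surfaces (the feature absent from the closed-manifold setting of \cite{CG2}), carry out the needed $\partial$-compressions without creating new essential intersections with $K$, and confirm that the surviving disks really cancel a maximum--minimum pair. This boundary bookkeeping, together with the interplay between strong irreducibility of the thick levels and triviality of the thin levels, is exactly where the refinements of \cite{HS2} and \cite{T} over \cite{CG2} are needed.
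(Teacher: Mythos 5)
The paper does not prove this statement: Theorem~\ref{WR} is imported wholesale from \cite{HS2} and \cite{T}, with only the remark that the general results there ``have been reduced to bridge positions of a knot in $S^3$.'' So there is no in-paper argument to compare yours against; what can be said is whether your outline is a faithful reconstruction of the cited proof, and in broad strokes it is. Passing to the exterior, viewing $W^\pm=B^\pm\cap E(K)$ as compression bodies split along the punctured sphere $P$, untelescoping \`a la Casson--Gordon \cite{CG2}, and then splitting into the cases ``some thin level is essential'' versus ``all thin levels are inessential'' is exactly the architecture of \cite{HS2} and \cite{T}.

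As a proof, however, your sketch has a genuine gap precisely where you flag it, and the gap is not merely ``boundary bookkeeping.'' First, a single weak reduction does not make the resulting thin surfaces incompressible; one must pass to a thinnest generalized splitting and invoke the (nontrivial) lemma that thin levels of a thinnest splitting are incompressible and that the thick levels are strongly irreducible --- you assert this rather than derive it. Second, and more seriously, the degenerate case is the entire content of the theorem: the disks $D^\pm$ are compressing disks for $S-K$ disjoint from the knot, whereas a stabilization (a ``perturbation'' in the sense of \cite{ST}, \cite{T}) is witnessed by a cancelling pair of \emph{bridge} disks whose boundaries meet $K$ and share an endpoint on it. Producing the latter from the former when the untelescoping collapses is the heart of \cite{HS2} and \cite{T}, and your proposal only promises to ``upgrade'' one to the other. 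Finally, the theorems of \cite{HS2} and \cite{T} actually conclude with a longer list of alternatives (stabilized, meridionally stabilized, cancellable/perturbed, the knot removable or isotopic into a level, or an essential surface exists); to obtain the clean two-alternative statement used here one must check that the extra alternatives are impossible or collapse for a knot in bridge position with respect to a sphere in $S^3$ (e.g.\ Heegaard-type stabilization of a sphere is excluded by a parity argument on the bridge sphere). That reduction is silently absorbed into the paper's one-line attribution, and it is missing from your outline as well.
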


\subsection{Essential Morse bridge position}

Following \cite{OZ}, we define a {\em Morse bridge position} of a pair $(F,K)$ of a closed surface $F$ containing a knot $K$ with respect to $h:S^3\to \Bbb{R}$ as follows.
We divide $S^3$ into three parts $M_+$, $M_0$ and $M_-$ so that $M_{\pm}\cap M_0=h^{-1}(c_{\pm})$ is a level 2-sphere for $c_+>0>c_-$.
We say that a pair $(F,K)$ with a bridge decomposition $(B^+,T^+)\cup_{S}(B^-,T^-)$ is in a {\em Morse bridge position} if

\begin{enumerate}
\item $h|_F$ is a Morse function,
\item $K$ is disjoint from critical points of $F$,
\item all maximal (resp. minimal) points of $F$ are contained in $M_+$ (resp. $M_-$),
\item all saddle points of $F$ are contained in $M_0$,
\item $(M_{\pm},K\cap M_{\pm})$ is homeomorphic to $(B^{\pm},T^{\pm})$ as a pair,
\item $(M_0,K\cap M_0)$ is homeomorphic to $(S,K\cap S)\times I$ as a pair,
\item $S=h^{-1}(0)$.
\end{enumerate}


Let $x$ be a saddle point of $F$ which corresponds to the critical value $t_x\in \Bbb{R}$.
Let $P_x$ be a pair of pants component of $F\cap h^{-1}([t_x-\epsilon, t_x+\epsilon])$ containing $x$ for a sufficiently small positive real number $\epsilon$.
Let $C_x^1$, $C_x^2$ and $C_x^3$ be the boundary components of $P_x$, where we assume that $C_x^1$ and $C_x^2$ (``thigh'' loops) are contained in the same level $h^{-1}(t_x\pm\epsilon)$, and $C_x^3$ (a ``waist'' loop) is contained in the another level $h^{-1}(t_x\mp\epsilon)$.
A saddle point $x$ of $F$ is {\em upper} (resp. {\em lower}) if $C_x^1$ and $C_x^2$ are contained in $h^{-1}(t_x-\epsilon)$ (resp. $h^{-1}(t_x+\epsilon)$).
See Figure \ref{upper/lower}.
A saddle point $x$ of $F$ is {\em essential} if both of the two loops $C_x^1$ and $C_x^2$ are essential in $F$, and it is {\em inessential} if it is not essential.
A Morse bridge position of a pair $(F,K)$ is said to be {\em essential} if $F$ has no inessential saddle point.

\begin{figure}[htbp]
	\begin{center}
	\begin{tabular}{ccc}
	\includegraphics[trim=0mm 0mm 0mm 0mm, width=.35\linewidth]{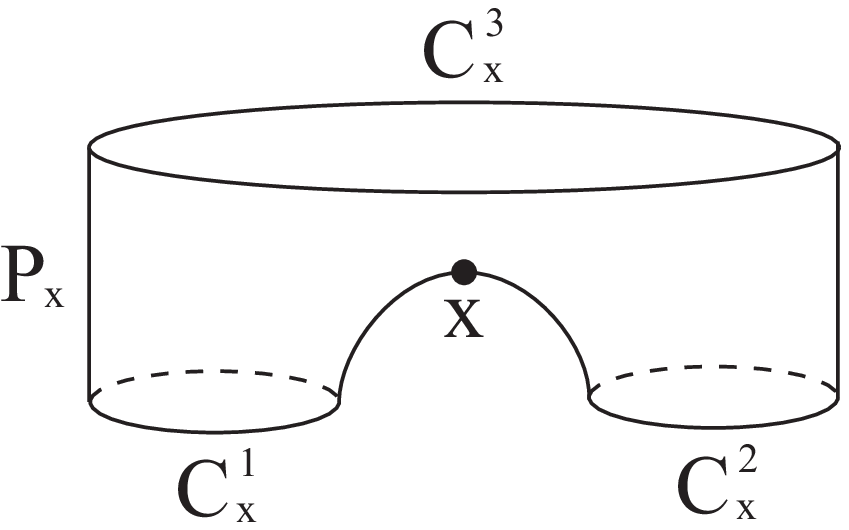}&
	\includegraphics[trim=0mm 0mm 0mm 0mm, width=.35\linewidth]{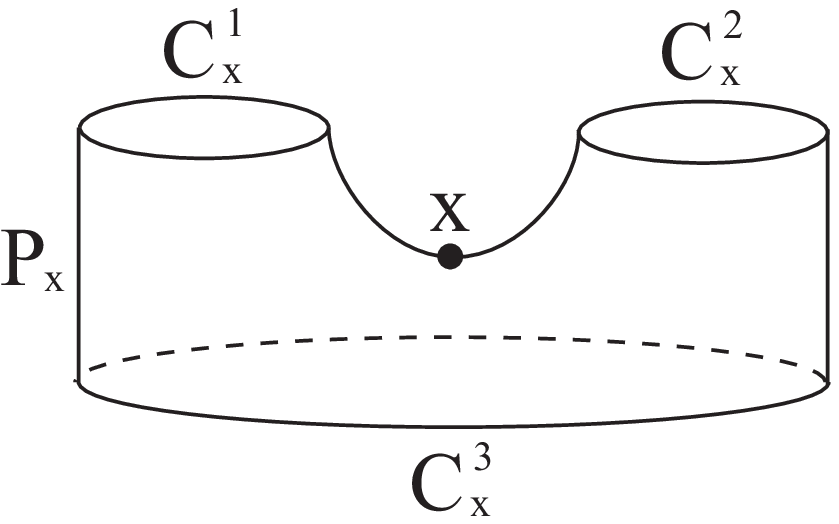}\\
	Upper saddle & Lower saddle
	\end{tabular}
	\end{center}
	\caption{A pair of pants $P_x$ containing a saddle point $x$}
	\label{upper/lower}
\end{figure}

\begin{lemma}[\cite{OZ},\cite{Sch}]\label{essential saddle}
A pair $(F,K)$ of a closed surface $F$ containing a knot $K$ with a bridge decomposition $(B^+,T^+)\cup_{S}(B^-,T^-)$ can be isotoped in its equivalence class so that it is in an essential Morse bridge position.
\end{lemma}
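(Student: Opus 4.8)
The plan is to first realize a (not necessarily essential) Morse bridge position, and then to remove inessential saddles one by one by a minimality argument.

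For the first step, I would start from the given bridge decomposition and put $K$ in the corresponding bridge position with respect to $h$, so that $S=h^{-1}(0)$, the maxima and minima of $K$ lie in $M_+$ and $M_-$, and $K\cap M_0$ is a trivial vertical tangle. I would then perturb $F$ so that $h|_F$ is Morse and $K$ avoids the critical points of $F$ (condition (2)), and isotope $F$, carrying $K$, so that every maximum of $F$ lies in $M_+$, every minimum in $M_-$, and every saddle in $M_0$. That such a reordering of critical values is possible follows from the standard Morse rearrangement lemma: a maximum lying below a saddle, a saddle below a minimum, or a maximum below a minimum is never joined to the offending critical point by a descending gradient trajectory, so its value can be moved past the other by an ambient isotopy. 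Since the critical points of $h|_F$ are disjoint from $K$ and $K\cap M_0$ is vertical, this isotopy can be taken to preserve the bridge position of $K$, yielding conditions (1)--(7).

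For the second step, among all Morse bridge positions of $(F,K)$ in its equivalence class I would choose one minimizing the number of saddle points of $h|_F$, and claim it is essential. Suppose not, and let $x$ be an inessential saddle, so that one thigh loop, say $C_x^1$, is inessential in $F$ and hence bounds a disk $D$ in $F$. Choosing $D$ innermost, I may assume $\mathrm{int}\,D$ meets no other inessential thigh loop, so that after cancelling any nested extrema $D$ caps $C_x^1$ off to a single extremum $e$. The loop $C_x^1$ is then destroyed at $x$ on one side and capped by $e$ through $D$ on the other, so $x$ and $e$ form a cancelling pair. Pushing $D$ across the critical level $h^{-1}(t_x)$ cancels $x$ against $e$, keeping $h|_F$ Morse and leaving all other critical points in their regions; the result is a Morse bridge position in the same equivalence class with one fewer saddle, contradicting minimality. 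Hence the minimizer has no inessential saddle.

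The step I expect to be the main obstacle is controlling the interaction of the disk $D$ with the knot $K\subset F$ during this cancellation. Because $K$ lies on $F$, the disk $D$ may meet $K$ in arcs, and the pushing isotopy must neither create new maxima or minima of $K$ nor alter the equivalence class of its bridge decomposition. I would arrange the isotopy to be supported in a neighborhood of $D$ together with a thin vertical band running to $x$, kept inside $M_0$ and away from the critical points of $K$ in $M_\pm$, so that the arcs $K\cap D$ are only slid monotonically and $K$ remains in bridge position. Checking that an innermost $D$ and the cancellation can always be chosen compatibly with the vertical tangle structure of $K\cap M_0$ is the delicate point on which the whole argument turns.
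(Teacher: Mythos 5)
The paper does not actually prove Lemma \ref{essential saddle}; it imports it from \cite{OZ} and \cite{Sch}, so there is no internal proof to compare against. Your two-step strategy---first realize some Morse bridge position by rearranging critical values, then minimize the number of saddles and cancel an inessential saddle against an extremum---is indeed the strategy of those references. But two points in your second step are genuine gaps. The phrase ``after cancelling any nested extrema $D$ caps $C_x^1$ off to a single extremum'' is not an argument: extrema do not cancel against one another, only against saddles. What you want is the observation that every level loop contained in a disk of $F$ is automatically inessential in $F$, so a disk $D$ bounded by an \emph{innermost} inessential thigh loop contains no thigh loops in its interior at all; shrinking the various $\epsilon$'s so that each pair of pants $P_y$ misses $\partial D$, a saddle in $\mathrm{int}\,D$ would force its whole pair of pants, and hence its thigh loops, into $\mathrm{int}\,D$. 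So $D$ contains no saddles, and the index count for the singular foliation induced by $h$ on $D$ then gives exactly one centre. That is what produces the cancelling pair; as written, your step does not.

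The more serious gap is the one you yourself flag and leave open: the effect of the cancellation on $K$. Your proposed remedy---supporting the isotopy ``inside $M_0$ and away from the critical points of $K$ in $M_\pm$''---cannot work as stated, because the extremum of $F$ being cancelled lies in $M_+$ or $M_-$ by condition (3), so the support of the cancelling isotopy necessarily leaves $M_0$; and $D$ cannot in general be kept away from the critical points of $K$, since $K\subset F$ may have many maxima or minima on $D$ (in this paper's own application each disk $D_x$ carries at least $p$ of them). Thus ``the arcs $K\cap D$ are only slid monotonically and $K$ remains in bridge position'' is precisely the assertion that needs proof. The cited arguments verify that the saddle--centre cancellation can be realized by an isotopy of the pair $(F,K)$ that does not increase the number of critical points of $h|_K$, after which the critical values are rearranged again to restore conditions (3)--(6) and the resulting bridge decomposition is checked to be equivalent to the original one. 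Without that verification your minimality argument does not close: the ``improved'' position might fail to be a Morse bridge position for the same equivalence class, and so would not be eligible for comparison with your minimizer.
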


\section{Proof of Theorem \ref{torus}}
\begin{proof}
Let $K$ be a $(p,q)$-torus knot $(2\le p<q)$ in $S^3$ which admits a non-minimal $n$-bridge decomposition $(B^+,T^+)\cup_{S}(B^-,T^-)$ $(p<n)$.
Since $K$ is a torus knot, there exists a torus $F$ containing $K$ such that $F$ separates $S^3$ into two solid tori $W_1$ and $W_2$, where we assume that $W_1$ contains a meridian disk $w_1$ with $|\partial w_1\cap K|=p$ and $W_2$ contains a meridian disk $w_2$ with $|\partial w_2\cap K|=q$.

\begin{lemma}
If a bridge decomposition of a torus knot is weakly reducible, then it is stabilized.
\end{lemma}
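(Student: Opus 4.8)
The plan is to feed the hypothesis straight into Theorem \ref{WR} and then kill the surviving alternative using the special geometry of torus knots. A weakly reducible bridge decomposition is, by Proposition \ref{fact}, the same object as a weakly reducible bridge position, so Theorem \ref{WR} applies verbatim: either the decomposition $(B^+,T^+)\cup_{S}(B^-,T^-)$ is stabilized, in which case the lemma holds, or the exterior $E(K)$ contains an essential meridional surface. All the remaining work is to show that the second possibility cannot occur when $K$ is a torus knot, i.e. that torus knots are meridionally small.

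To rule out an essential meridional surface I would use the Seifert fibered structure of the exterior. The exterior of the $(p,q)$-torus knot is Seifert fibered over the disk with two exceptional fibers of orders $p$ and $q$; in the present set-up the cabling annulus $F\cap E(K)$ is the prototypical vertical surface. Suppose for contradiction that $S\subset E(K)$ is incompressible and $\partial$-incompressible with $\partial S$ a nonempty system of meridians of $K$. By the classification of incompressible, $\partial$-incompressible surfaces in a Seifert fibered space (Waldhausen), $S$ may be isotoped so as to be either vertical (saturated by fibers) or horizontal (transverse to every fiber).

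Next I would compare boundary slopes. Coordinatize $\partial E(K)$ by the meridian $\mu$ and the Seifert longitude $\lambda$, so that the regular fiber is the cabling curve of slope $pq$. If $S$ is vertical, then $\partial S$ consists of regular fibers, hence of curves of slope $pq$; if $S$ is horizontal, then $\partial S$ consists of longitudes, of slope $0$. Since $2\le p<q$, the three slopes $\infty$ (the meridian), $0$ and $pq$ are pairwise distinct, so in neither case can $\partial S$ consist of meridians. This contradicts the choice of $S$, so $E(K)$ contains no essential meridional surface, and Theorem \ref{WR} then forces the weakly reducible decomposition to be stabilized.

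The step I expect to be the crux is the boundary-slope bookkeeping on $\partial E(K)$, namely verifying that the regular-fiber slope, the horizontal (longitudinal) slope, and the meridian slope are genuinely three different slopes; this is precisely the assertion that torus knots are meridionally small. Once the Seifert classification of essential surfaces and this slope computation are in hand, the reduction through Theorem \ref{WR} is immediate and the rest of the argument is routine.
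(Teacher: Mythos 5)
Your proof is correct and takes essentially the same route as the paper: feed the hypothesis into Theorem \ref{WR} and eliminate the essential meridional surface alternative. The only difference is that the paper simply cites Tsau \cite{Ts} for the fact that the torus knot exterior contains no essential meridional surface, whereas you supply the standard justification via the Seifert fibered structure (vertical surfaces have boundary slope $pq$, horizontal ones slope $0$, neither the meridian); that computation is accurate, so the two arguments coincide in substance.
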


\begin{proof}
Since the torus knot exterior does not contain an essential meridional surface \cite{Ts}, this lemma follows Theorem \ref{WR}.
\end{proof}

Hereafter, we assume that $(B^+,T^+)\cup_{S}(B^-,T^-)$ is strongly irreducible.

Proposition \ref{fact} shows that $K$ admits an $n$-bridge position with respect to the hight function $h:S^3\to \Bbb{R}$, where $S=h^{-1}(0)$, $B^{\pm}=h^{-1}([0,\pm\infty))$.
Then we apply Lemma \ref{essential saddle} to the pair $(F,K)$.
Since $F$ is a torus, for each essential saddle point $x$ of $F$, the waist loop $C_x^3$ bounds a disk $D_x$ in $F$ which contains only one maximal or minimal point, and the thigh loops $C_x^1$ and $C_x^2$ are mutually parallel essential loops in $F$.
Since all thigh loops are parallel to either $\partial w_1$ or $\partial w_2$ in $F$, each disk $D_x$ contains at least $p$ maximal (resp. minimal) points of $K$, where $x$ is an upper (resp. lower) saddle point.
Two saddle points $x$ and $y$ of $F$ are said to be {\em adjacent} if one of $C_x^1, C_x^2$ and one of $C_y^1, C_y^2$ cobound an annulus in $F$, say $A_{xy}$, which does not contain any critical point.
We call such an annulus $A_{xy}$ an {\em adjoining annulus} between $x$ and $y$.
With this notation, we have a partition $F=(\bigcup_{x\in X} P_x) \cup (\bigcup_{x\in X} D_x) \cup (\bigcup_{x,y\in X} A_{xy})$ of pairs of pants, disks and adjoining annuli, where $X$ is the set of all saddle points of $F$.

The following lemma says that the torus $F$ is also in a ``bridge position'', that is, all upper saddle points are above all lower saddle points.

\begin{lemma}\label{adjoining}
There exists a level 2-sphere $h^{-1}(t)$ $(c_+>t>c_-)$ in $M_0$ which intersects all adjoining annuli.
\end{lemma}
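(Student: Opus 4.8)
\emph{Proof idea.} The plan is to first extract the vertical structure of the adjoining annuli, then reduce the statement to a single inequality between saddle heights, and finally rule out the bad configuration using strong irreducibility. To begin, I would record that each adjoining annulus $A_{xy}$ contains no critical point of $h|_F$, so $h$ restricts to it without critical points and $A_{xy}$ is swept monotonically between its two boundary thighs; hence $A_{xy}$ occupies a closed height interval $I_{xy}=[\,l_{xy},u_{xy}\,]$, where $l_{xy}$ and $u_{xy}$ are the heights of its lower and upper boundary thigh. A level $h^{-1}(t)$ meets $A_{xy}$ exactly when $t\in I_{xy}$, so the conclusion is equivalent to $\bigcap_{x,y} I_{xy}\neq\emptyset$, that is, to the single inequality $\max_{x,y} l_{xy}\le \min_{x,y} u_{xy}$.

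Next I would analyse how the pairs of pants are attached. Since for an upper (resp.\ lower) saddle the pair of pants lies above (resp.\ below) its two thighs, the adjoining annulus glued to a thigh is forced to run downward from an upper saddle and upward from a lower saddle; as a monotone annulus cannot run downward (or upward) from both ends, no two upper saddles and no two lower saddles can be adjacent. Because each saddle has exactly two thighs and $F$ is connected, the saddles therefore form a single cycle in which upper and lower saddles alternate, and each adjoining annulus joins an upper saddle to a lower saddle with $u_{xy}$ the thigh height of the upper saddle and $l_{xy}$ that of the lower one, necessarily with $u_{xy}>l_{xy}$. Under this dictionary the target inequality becomes precisely: the highest lower-saddle thigh lies below the lowest upper-saddle thigh.

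Finally, suppose this fails, so that there are an upper saddle $x^\ast$ and a lower saddle $y^\ast$ whose thighs satisfy $\,\text{(thigh of }y^\ast)\,$ above $\,\text{(thigh of }x^\ast)$; pick a regular level $t_0$ strictly between them, and note that, using the product structure of $M_0$, strong irreducibility may be tested on the bridge sphere $h^{-1}(t_0)$ in place of $S$. Above $x^\ast$ the cap $P_{x^\ast}\cup D_{x^\ast}$ rises to a maximum lying above $t_0$ while both thighs of $x^\ast$ lie below $t_0$, and symmetrically the cap over $y^\ast$ descends to a minimum below $t_0$ while its thighs lie above $t_0$. The thighs are essential in $F$, hence meridians of $W_1$ or $W_2$, and so bound meridian disks of the corresponding solid tori whose interiors miss $F\supset K$; two such disks, one at a thigh of $x^\ast$ and one at a thigh of $y^\ast$, can always be made disjoint (they lie in different solid tori, or are parallel meridians of the same one). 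I would then use these disks, together with the fact that one cap reaches above and the other below $t_0$, to produce a compressing disk for $h^{-1}(t_0)-K$ in the upper tangle and one in the lower tangle that are disjoint from each other, contradicting strong irreducibility.

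I expect this last step to be the main obstacle: converting the meridian and cap data (disks bounded by thigh loops on $F$ and disjoint from $K$) into honest, mutually disjoint compressing disks for the bridge sphere on opposite sides. The delicate points are that the relevant cross-sectional loops of $F$ at level $t_0$ are essential in the bridge sphere minus $K$, that the compressions genuinely lie in $B^+-T^+$ and $B^--T^-$ respectively, and that they remain disjoint after being pushed to have boundary on the level sphere; the bookkeeping of which thighs are cut by $t_0$ and of the inessential cross-sections occurring near the saddle levels will require care.
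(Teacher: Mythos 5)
Your reduction agrees with the paper's own first step: the paper likewise observes that the lemma holds once every upper saddle lies above every lower saddle, and, when this fails, takes a regular ``thin'' level $t_0$ between a lower saddle $x$ sitting above an upper saddle $y$ and uses condition (6) of the Morse bridge position to carry a weak reduction of $h^{-1}(t_0)$ back to $S$. Your monotonicity analysis of the adjoining annuli and the resulting alternation of upper and lower saddles are correct and make this equivalence explicit. One small correction: you must choose $t_0$ strictly between the two saddle \emph{values} (equivalently, between the waist levels), not merely between the thigh heights; if $t_0$ lies between a thigh and its saddle, the piece of $F$ cut off on the relevant side is an annulus containing the saddle rather than a disk, and the construction below breaks.

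The genuine gap is exactly the step you flag, and the route you propose for it --- meridian disks of $W_1$ and $W_2$ bounded by the thigh loops --- would fail: those disks bear no relation to the height function, do not lie on one side of $h^{-1}(t_0)$, and cannot readily be converted into compressing disks for the punctured level sphere on prescribed sides. The paper instead uses pieces of $F$ itself. For the lower saddle $x$ lying above $t_0$, the waist loop bounds a disk $D_x\subset F$ containing a single minimum of $F$, and $D_x'=D_x\cap h^{-1}((-\infty,t_0])$ is already a properly embedded disk in the lower half-space; symmetrically $D_y'=D_y\cap h^{-1}([t_0,+\infty))$ is a properly embedded disk in the upper half-space, and the two are disjoint because $D_x$ and $D_y$ are disjoint subdisks of the embedded torus $F$. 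Each of $D_x'$ and $D_y'$ contains at least $p\ge 2$ minimal (resp.\ maximal) points of $K$ --- this is where the essentiality of the thigh loops, i.e.\ their parallelism to $\partial w_1$ or $\partial w_2$, actually enters --- and a slight isotopy pushing them off $F$ produces disjoint compressing disks for $h^{-1}(t_0)-K$ on opposite sides, giving the weak reduction and hence the contradiction with strong irreducibility. Without this construction (or an equivalent one) your argument does not establish the lemma.
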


\begin{proof}
This lemma follows the argument of \cite[Lemma 6]{Sch2} and the assumpution that $(B^+,T^+)\cup_{S}(B^-,T^-)$ is strongly irreducible.
If all upper saddle points are above all lower saddle points, then this lemma holds.
Otherwise, there exists a regular value $t_0\in \Bbb{R}$ between adjacent critical values $t_x>t_y$ which correspond to a lower saddle point $x$ and an upper saddle point $y$ respectively, in other words, $h^{-1}(t_0)$ is a ``thin sphere'' for $F$.
Then $D_x'=D_x\cap h^{-1}([t_0,-\infty))$ and $D_y'=D_y\cap h^{-1}([t_0,+\infty))$ are disjoint disks properly embedded in $h^{-1}([t_0,-\infty))$ and $h^{-1}([t_0,+\infty))$ respectively.
Since both of $D_x'$ and $D_y'$ contain at least $p$ minimal or maximal points respectively, if we isotope $D_x'$ and $D_y'$ slightly, then we have two disks which shows that a bridge decomposition given by $h^{-1}(t_0)$ is weakly reducible.
By the condition (6) of Morse bridge position, it follows that $(B^+,T^+)\cup_{S}(B^-,T^-)$ is also weakly reducible.
This contradicts the assumpution.
\end{proof}

By the condition (6) of Morse bridge position, we may assume that the level 2-sphere $h^{-1}(t)$ given in Lemma \ref{adjoining} is $S=h^{-1}(0)$.
Thus $F\cap B^{\pm}$ consists of annuli each of which contains only one essential saddle point and only one maximal or minimal point.
Hereafter we denote this annulus decomposition of $F$ by $F=A_1\cup A_2\cup \cdots \cup A_n$ in this order on the torus $F$, where $n=|F\cap S|$ and $A_i\subset B^+$ (resp. $A_i\subset B^-$) if $i$ is odd (resp. even).

\subsection{The case that $|F\cap S|\ge 4$}

First we consider the case that $|F\cap S|\ge 4$.
Put $C_i=A_i\cap A_{i+1}$ for $i=1,\ldots,n-1$ and $C_n=A_n\cap A_1$.
Let $A_i'$ be an annulus in $S$ which is cobounded by $\partial A_i$ for $i=1,\ldots,n$.

There are three possibilities for a pair of adjacent annuli $(A_i,A_{i+1})$ $(i=1,\ldots,n-1)$ or $(A_n,A_1)$.

\begin{enumerate}
\item ``{\em Normal}'': Neither turning nor Reeb.
\item ``{\em Turning}'': $\text{int}A_i'\cap \text{int}A_{i+1}'=\emptyset$ $(i=1,\ldots,n-1)$ or $\text{int}A_n'\cap \text{int}A_1'=\emptyset$.
\item ``{\em Reeb}'': $A_i'\subset A_{i+1}'$ or $A_i'\supset A_{i+1}'$ $(i=1,\ldots,n-1)$, or $A_n'\subset A_1'$ or $A_n'\supset A_1'$.
\end{enumerate}

\begin{figure}[htbp]
	\begin{center}
	\begin{tabular}{ccc}
	\includegraphics[trim=0mm 0mm 0mm 0mm, width=.35\linewidth]{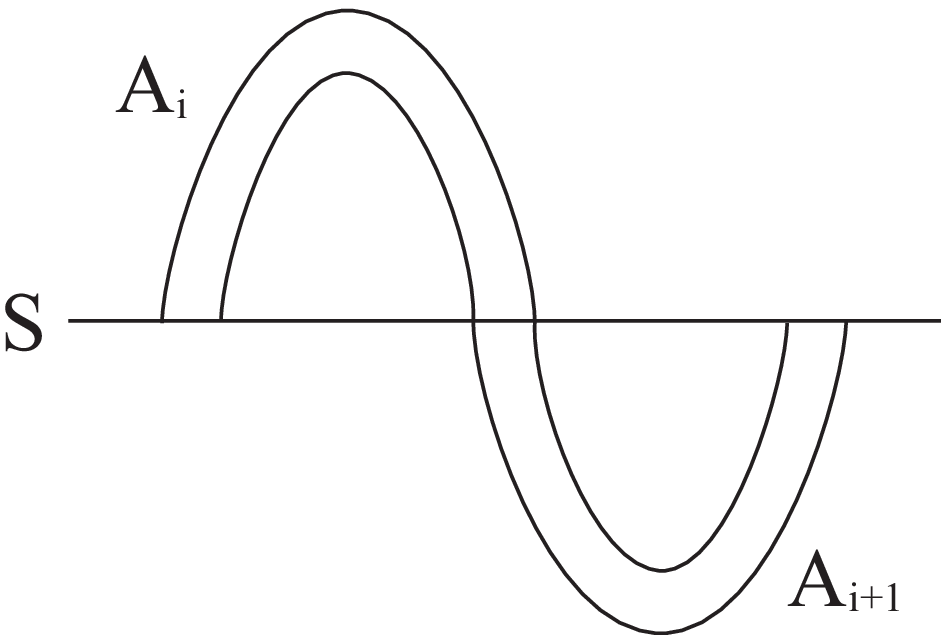}&
	\includegraphics[trim=0mm 0mm 0mm 0mm, width=.32\linewidth]{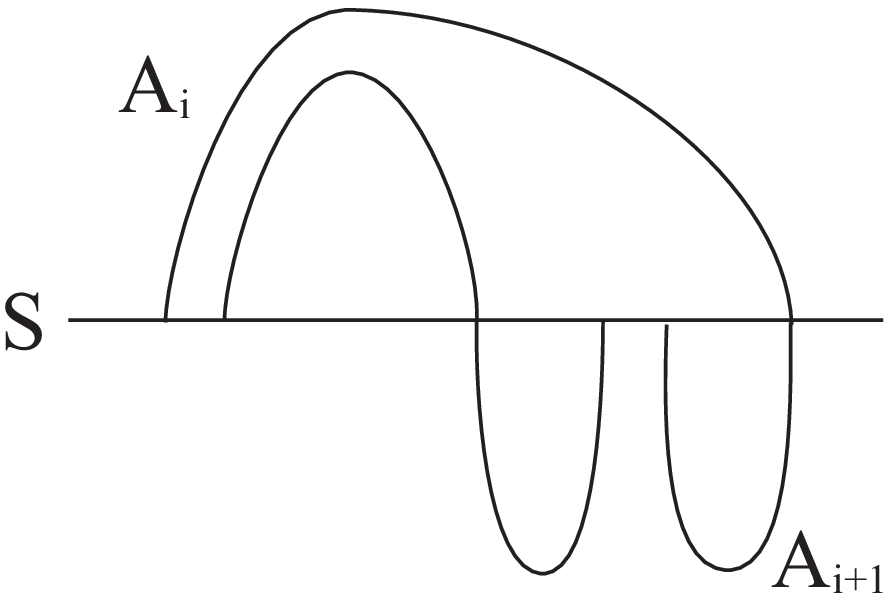}&
	\includegraphics[trim=0mm 0mm 0mm 0mm, width=.25\linewidth]{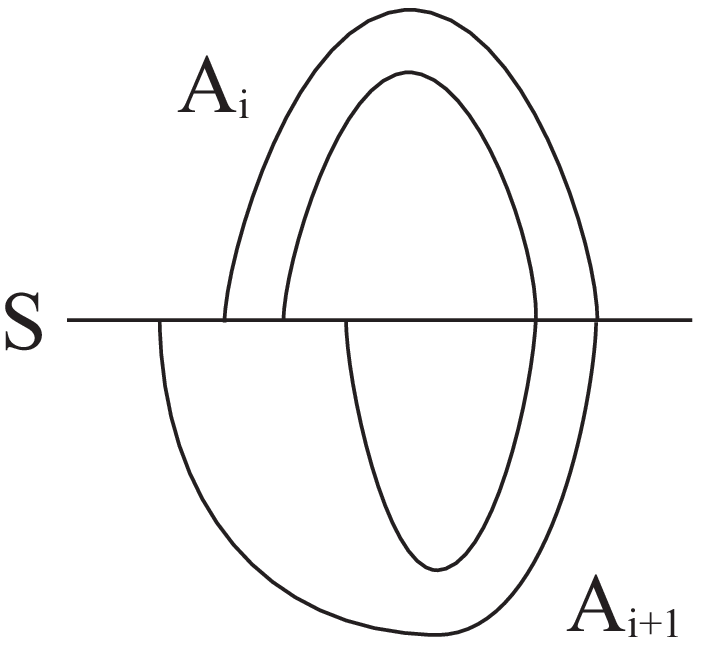}\\
	Normal & Turning & Reeb
	\end{tabular}
	\end{center}
	\caption{Adjacent annuli}
	\label{adjacent}
\end{figure}

\begin{lemma}\label{two cases}
Then one of the following holds.
\begin{enumerate}
\item $C_1,\ldots,C_n$ bound mutually disjoint disks $D_1,\ldots,D_n$ in $S$.
\item There exists a turning pair of adjacent annuli.
\end{enumerate}
\end{lemma}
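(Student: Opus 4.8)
The plan is to establish the dichotomy by proving the sharper statement: \emph{if no adjacent pair of annuli is turning, then conclusion (1) holds.} The only input beyond the planar combinatorics of the curves $C_1,\dots,C_n\subset S$ will be that $F$ is embedded and that the annuli alternate sides, $A_i\subset B^+$ for $i$ odd and $A_i\subset B^-$ for $i$ even.

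First I would translate both the hypothesis and the desired conclusion into statements about the nesting of the $C_i$ on the sphere $S$. Conclusion (1) is equivalent to the existence of a complementary region of $\bigcup_i C_i$ that is incident to every $C_i$; equivalently, the nesting tree of $C_1,\dots,C_n$ is a star, and its failure is equivalent to the existence of three of the curves forming a nested chain $D_a\supsetneq D_b\supsetneq D_c$ of disks in $S$. On the other side, at the shared curve $C_i$ the pair $(A_i,A_{i+1})$ is turning precisely when $C_{i-1}$ and $C_{i+1}$ lie on opposite sides of $C_i$, and is normal or Reeb precisely when they lie on the same side (unnested, respectively nested, within that side). Thus the hypothesis ``no turning pair'' says exactly that for every $i$ the two neighbours $C_{i-1},C_{i+1}$ lie on the same side of $C_i$.

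The crux tool is a separation argument coming from embeddedness. For $i\equiv j\pmod 2$ the annuli $A_i,A_j$ are disjoint and properly embedded in the same ball $B^{+}$ or $B^{-}$, with all of their boundary circles lying on $S$. Such an annulus is separating in the ball, and its two boundary circles cut $S$ into three regions, of which the middle band lies on one side of the annulus and the two outer regions lie on the other. Hence if the boundary pair of $A_i$ were to \emph{split} the boundary pair of $A_j$ on $S$ (one circle of $\partial A_j$ in the middle band, the other outside it), then $A_j$ would be forced to meet $A_i$, contradicting embeddedness. Therefore the boundary pairs of any two same-side annuli are non-interleaving on $S$: they are either nested or unlinked. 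In particular the odd-indexed boundary pairs form a laminar family, and so do the even-indexed ones.

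Finally I would assume that (1) fails and derive (2). Failure of (1) produces a nested chain of three curves; propagating the ``same side'' condition supplied by ``no turning'' along the cyclic sequence $C_1,\dots,C_n$, and using the parity alternation of the $A_i$, I would locate two annuli of equal parity whose boundary pairs interleave on $S$, contradicting the non-interleaving just established. The \textbf{main obstacle} is precisely this last bookkeeping step, and it is where the real work lies. ``No turning'' by itself is \emph{not} enough: a zigzag nesting of the curves (for instance four concentric circles read in the cyclic order first, third, second, fourth) avoids every turning pair combinatorially, yet does not bound disjoint disks. What rescues the statement is embeddedness, through the non-interleaving property, and the delicate part is to show that a nested chain, together with the same-side condition and the side-alternation, always forces one of the forbidden interleavings onto a single side $B^+$ or $B^-$. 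Getting the indices and sides to line up in this extraction is the heart of the argument.
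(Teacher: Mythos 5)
Your reductions are all correct: proving ``no turning pair implies (1)'' is exactly the contrapositive the paper establishes, your local reformulation of turning/normal/Reeb as a same-side/opposite-side condition at each $C_i$ is accurate, and the non-interleaving lemma for disjoint annuli properly embedded in the same ball (an annulus in $B^{\pm}$ with boundary on $S$ separates, so a disjoint same-side annulus cannot have one boundary circle in $\operatorname{int}A_i'$ and the other in one of the two complementary disks) is both true and genuinely the right tool. Your zigzag example --- four nested circles read in the cyclic order first, third, second, fourth --- is a fair warning: it has no turning pair, fails conclusion (1), and is excluded only by embeddedness, precisely because the two even-indexed annuli would then have interleaving boundary pairs.

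The problem is that the proposal stops exactly where the proof has to happen. You yourself label the final step --- extracting, from a nested triple $D_a\supsetneq D_b\supsetneq D_c$ together with the same-side condition at every $C_i$ and the parity alternation, two equal-parity annuli with interleaving boundaries --- as ``the main obstacle'' and ``where the real work lies,'' and you do not carry it out. That extraction is the entire content of the lemma; what you have is a plan with the correct ingredients, not a proof. For comparison, the paper proceeds by a different bookkeeping: assuming no turning pair, it first claims that a Reeb pair $A_i'\supset A_{i+1}'$ forces $A_{i+1}'\supset A_{i+2}'$ (using that $C_{i+1}$ is essential in $A_i'$), so a single Reeb pair propagates around the torus into an infinite strictly descending chain of annuli in $S$, which is absurd for a finite family; hence all adjacent pairs are normal, and an induction starting from an innermost $C_n$ then constructs the disjoint disks one at a time, any failure producing a ``long Reeb'' and again an infinite descending chain. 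As your example indicates, the propagation step cannot be purely combinatorial --- the reversed inclusion $A_{i+1}'\subset A_{i+2}'$ must be excluded, and your non-interleaving observation is exactly what excludes it (there $C_{i+2}$ lands in the disk bounded by $C_i$ while $C_{i+1}$ lies in $\operatorname{int}A_i'$, so $\partial A_{i+2}$ interleaves $\partial A_i$). So your critique of the combinatorics is well taken, but identifying the missing ingredient is not the same as supplying the argument: to finish, you need to actually run either the chain-propagation or the innermost-disk induction with the non-interleaving lemma in hand.
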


\begin{proof}
We suppose that there exists no turning pair of adjacent annuli, and show that $C_1,\ldots,C_n$ bound mutually disjoint disks $D_1,\ldots,D_n$ in $S$.

If there exists a Reeb pair of adjacent annuli $(A_i,A_{i+1})$ such that $A_i'\supset A_{i+1}'$ without loss of generality, then $(A_{i+1},A_{i+2})$ is also a Reeb pair of adjacent annuli such that $A_{i+1}'\supset A_{i+2}'$ since $C_{i+1}$ is essential in $A_i'$ and there exists no turning pair of adjacent annuli.
As a consequence of this observation, we obtain an infinite sequence $A_i'\supset A_{i+1}'\supset A_{i+2}'\supset \cdots$, a contradiction.
Hence, all pair of adjacent annuli are normal.

We show by an induction on $k$ $(k=1,\ldots,n-1)$ that $C_n,C_1,\ldots,C_k$ bound mutually disjoint disks $D_n,D_1,\ldots,D_k$ in $S$ whose interiors are disjoint from $A_1\cup \cdots \cup A_k$.
We may assume without loss of generality that $C_n$ is innermost in $S$ among all $C_1,\ldots,C_n$.
First, it is clear that $C_n,C_1$ bound mutually disjoint disks $D_n,D_1$ in $S$ whose interiors are disjoint from $A_1$.
Thus the induction holds on $k=1$.
Next suppose that the induction holds for $k<i$ and does not hold on $k=i$.
Then $C_i$ bounds a disk $D_i$ in $S$ with $D_i\cap D_{i-1}=\emptyset$ such that $D_i$ contains some disks of $D_n,D_1,\ldots,D_{i-3}$.
This implies that there exists a ``long Reeb''$A_j\cup A_{j+1}\cup \cdots \cup A_{j+s-1}$ of length $s$ $(1\le j\le i-3,\ j+s-1=i)$, that is, a subsequence of annuli between $A_j$ and $A_i$ such that $D_i\supset D_{j-1}$ if $2\le j\le i-3$ or $D_i\supset D_n$ if $j=1$.
Since there exists no turning pair of adjacent annuli, we have $A_j'\supset A_{i+1}',\ A_{j+1}'\supset A_{i+2}',\ \ldots,\ A_{j+s-1}'\supset A_{i+s}'$.
Thus $A_i'\supset A_{i+s}'$ and hence $A_{i+1}\cup\cdots\cup A_{i+s}$ is also a long Reeb of length $s$.
Consequently, we obtain an infinite sequence $A_i'\supset\cdots\supset A_{i+s}'\supset\cdots\supset A_{i+2s}'\supset \cdots $, a contradiction.
\end{proof}

In the conclusion (1) of Lemma \ref{two cases}, there exists a core loop $L$ of the solid torus $W_i$ $(i=1$ or $2)$ such that $L$ intersects $D_i$ in one point for $i=1,\ldots,n$.
Then we have an $n/2$-bridge decomposition of a trivial knot $L$ by $S$ $(n\ge4)$ and it is stabilized by Theorem \ref{Otal}.
This implies that an $n$-bridge decomposition $(B^+,T^+)\cup_{S}(B^-,T^-)$ of $K$ is also stabilized.

In the conclusion (2) of Lemma \ref{two cases}, without loss of generality, we may assume that $(A_1,A_2)$ is a turning pair of adjacent annuli.
Let $E_1\subset B^+$ and $E_2\subset B^-$ be $\partial$-compressing disks for $A_1$ and $A_2$ respectively.

\begin{lemma}\label{essential arcs}
The $\partial$-compressing disk $E_j$ $(j=1,2)$ can be isotoped so that it intersects $A_i$ in essential arcs $(i=3,\ldots,n)$.
\end{lemma}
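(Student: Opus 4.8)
The plan is to put each $E_j$ in a position minimizing its intersection with the other annuli and then clean up the intersection by the usual innermost-disk and outermost-arc arguments. First I would write $\partial E_1=\alpha_1\cup\beta_1$ with $\alpha_1\subset A_1$ a spanning (essential) arc and $\beta_1\subset S$, make $E_1$ transverse to $F$, and record the parity observation: since $E_1\subset B^+$ while $A_i\subset B^+$ only for odd $i$, the disk $E_1$ can meet $A_i$ only when $i$ is odd (symmetrically, $E_2$ meets $A_i$ only for even $i$); for the remaining indices the intersection is empty and there is nothing to prove. Thus $E_1\cap(\bigcup_{i\ge 3}A_i)$ is a compact $1$-manifold whose arc endpoints lie on $\beta_1\cap(C_{i-1}\cup C_i)\subset S$, and I would choose $E_1$ so that its total number of intersection components is minimal among all $\partial$-compressing disks for $A_1$.

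Next I would eliminate the closed components. Taking a loop of $E_1\cap A_i$ that is innermost on $E_1$, it bounds a subdisk $E'\subset E_1$ with $\text{int}\,E'\cap F=\emptyset$. If $\partial E'$ is inessential in $A_i$ it bounds a disk $\delta\subset A_i$, and since $B^+$ is a $3$-ball the sphere $E'\cup\delta$ bounds a ball across which I can isotope $E'$, lowering the number of components and contradicting minimality. If instead $\partial E'$ is essential in $A_i$, then it is a core of $A_i$, hence a curve parallel to $\partial w_1$ or $\partial w_2$, and $E'$ is a meridian disk of the corresponding solid torus $W_1$ or $W_2$ lying to one side of $F$; banding $E'$ to the boundary circle $C_{i-1}$ along the sub-annulus of $A_i$ it cuts off yields a compressing disk for $F$, which I would remove by an isotopy supported in the resulting product region (see the obstacle below). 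After this step $E_1\cap A_i$ consists only of arcs.

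Finally I would handle the arcs. Each such arc is properly embedded in $A_i$ with endpoints on $\partial A_i\subset S$, so it is either essential (spanning the two boundary circles) or boundary-parallel. Choosing an arc outermost in $A_i$, a boundary-parallel one cuts off a disk $\delta\subset A_i$ meeting $E_1$ only in that arc; a $\partial$-isotopy of $E_1$ across $\delta$, carried out along $S$ so that the boundary pattern $\partial E_1=\alpha_1\cup\beta_1$ on $A_1\cup S$ is preserved, removes the arc and again contradicts minimality. Hence no boundary-parallel arc survives, and every remaining component of $E_1\cap A_i$ is an essential arc. The identical argument applied to $E_2$ in $B^-$ gives the statement for $j=2$.

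The main obstacle is the essential-closed-curve case in the second step. Unlike in a manifold with incompressible boundary, an annulus $A_i$ in the $3$-ball $B^{\pm}$ need not be incompressible, since its core is parallel to $C_{i-1}\subset S$ and hence bounds a disk in the sphere $S$; so essential loops of intersection cannot be dismissed for free. The point to get right is that such a loop forces a meridian disk of $W_1$ or $W_2$ on one side of the torus $F$, and that this configuration is either undone by an isotopy of $F$ through essential Morse bridge positions or excluded using the standing strong-irreducibility assumption. Making this precise, while keeping the boundary curves $C_{i-1},C_i$ and their intersections with $K$ under control, is where the real work lies.
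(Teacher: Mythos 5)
You take a genuinely different route from the paper. You start from an arbitrary $\partial$-compressing disk $E_1$, minimize its intersection with $\bigcup_{i\ge 3}A_i$, and clean up by innermost-disk and outermost-arc moves; the paper never cleans up an arbitrary disk at all, but \emph{constructs} $E_1$ by induction on the number of annuli $A_i$ with $A_i'\subset A_1'$ (those $\partial$-parallel to $A_1'$ in the solid torus $V_1$), using the fact that each $A_i$ is a $1$-bridge annulus to get pairwise disjoint $\partial$-compressing disks in the base case and then gluing copies of a $\partial$-compressing disk for an outermost $A_k$ onto the disk supplied by the inductive hypothesis. Your parity remark is correct, and the arc-removal step is sound: an outermost boundary-parallel arc of $E_1\cap A_i$ cuts off a disk $\delta\subset A_i$, and the isotopy across $\delta$ is supported in a small neighborhood of $\delta$ that misses every other annulus, so it strictly reduces the intersection count.

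The gap is exactly where you locate it, and it is a real missing idea rather than a routine verification. A closed curve of $E_1\cap A_i$ that is essential in $A_i$ is a core of $A_i$, hence essential in the torus $F$, and the innermost subdisk $E'$ it bounds in $E_1$ is a compressing disk for $F$ lying in the $3$-ball $B_i$ (it cannot lie in $V_i$, where the core of $A_i$ is a longitude of the solid torus, but in $B_i$ the core is a curve on the boundary sphere and does bound a disk, so incompressibility gives you nothing). Neither of your two proposed escapes clearly applies: the standing strong-irreducibility hypothesis concerns compressing disks for the punctured bridge sphere $S-K$ in $B^{\pm}-T^{\pm}$ and says nothing about compressing disks for $F$; and an isotopy of $F$ at this stage is not available, because the essential Morse bridge position, the annulus decomposition $F=A_1\cup\cdots\cup A_n$, and the choice of the turning pair $(A_1,A_2)$ have already been fixed and are used in the remainder of the argument. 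The banding you describe produces a disk bounded by $C_{i-1}$ whose interior still contains the sub-annulus of $A_i$ it was banded along (and possibly strings of $T^+$), so it does not by itself yield an isotopy of $E_1$ that lowers the intersection count. To complete your route you would need an independent argument that a minimizing $\partial$-compressing disk has no essential closed curves of intersection with any $A_i$; that is precisely the difficulty the paper's inductive construction is designed to sidestep, since the disks it builds are disjoint from, or meet in essential arcs, each annulus by construction.
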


\begin{proof}
Without loss of generality, we show this lemma only for $E_1$.

We remark first that each annulus $A_i$ $(i=3,5,\ldots,n-1)$ separates the 3-ball $B^+$ into a 3-ball, say $B_i$, and a solid torus, say $V_i$, and $A_i$ is $\partial$-parallel to $A_i'$ in $V_i$ since $A_i$ is a ``1-bridge annulus'', that is, it contains only one essential saddle point and only one maximal point.
We remark next that there are the following three types for an annulus $A_i$ in $V_1$.
\begin{enumerate}
\item $\partial A_i$ consists of mutually non-parallel inessential loops in $A_1'$.
\item $\partial A_i$ consists of mutually parallel inessential loops in $A_1'$.
\item $\partial A_i$ consists of mutually parallel essential loops in $A_1'$.
\end{enumerate}
In this three types, an annulus of type (1) is not $\partial$-parallel to $A_i'$ in $V_1$, and an annulus of type (2) or (3) is $\partial$-parallel to $A_i'$ in $V_1$.
Let $\mathcal{A}$ be the set of annuli contained in $V_1$ from $A_i$ $(i=3,5,\ldots,n-1)$, and $\partial \mathcal{A}\subset \mathcal{A}$ be the set of annuli which are $\partial$-parallel to $A_1'$, in other words, an annulus $A_i$ which satisfies $A_i'\subset A_1'$.

We show this lemma by an induction on $l=|\partial \mathcal{A}|$.

First if $l=0$, then all annuli in $V_1$ are of type (1).
This shows that there exists a string $t_i$ in $B_i$ for all $i$ such that $N(t_i)=B_i$.
Since $A_i$ is a 1-bridge annulus, we can take $t_i$ so that it is also 1-bridge with respect to $h:B^+\to [0,+\infty)$.
Hence $t_i$ bounds a disk $e_i$ with an arc $t_i'$ in $S$ with $\partial t_i=\partial t_i'$ such that $e_i\cap e_j=\emptyset$ for $i\ne j$.
This shows that there exists a $\partial$-compressing disk $E_i$ for $A_i\in \mathcal{A}$ such that $E_i\cap E_j=\emptyset$ for $i\ne j$.
We have particularly $E_1\cap A_i=\emptyset$ for $i=3,5,\ldots,n-1$.

Next suppose that the induction holds for $l=k-1$, and let $l=k$.
Without loss of generality, we may assume that $A_k$ is ``outermost'' in $V_1$, that is, $V_k$ does not contain an annulus in $\partial \mathcal{A}$.
Let $\mathcal{A}_k$ be the set of annuli $A_i\in\mathcal{A}$ which are contained in $B_k$ (including $A_k$), and $\mathcal{A}_k'$ be the set of annuli $A_i\in\mathcal{A}$ which are contained in $V_k$.
Thus $\mathcal{A}=\mathcal{A}_k\cup \mathcal{A}_k'$.
Since $A_k$ is $\partial$-parallel in $V_1$, the height function $h:B^+\to [0,+\infty)$ can be extended to $h':B_k\to [0,+\infty)$, and each annulus $A_i\in \mathcal{A}_k$ is still a 1-bridge annulus with respect to $h'$.
Therefore, by the supposition of the induction for $l=k-1$, there exists a $\partial$-compressing disk $E_1\subset B_k$ for $A_1$ such that $E_1$ intersects each annulus $A_i\in \mathcal{A}_k$ in essential arcs.
Furthermore, $E_1$ can be isotoped so that $\partial E_1$ intersects $A_k\subset \partial B_k$ in essential arcs since $\partial A_k$ consists of mutually parallel loops in $A_1'$ and hence in $\partial B_k\cap V_1$.
On the other hand, since $\mathcal{A}_k'$ does not contain an annulus which is $\partial$-parallel to $A_k'$ in $V_k$, there exists a $\partial$-compressing disk $E_k\subset V_k$ for $A_k$ which is disjoint from $A_i\in \mathcal{A}_k'$ as in the case that $l=0$.
Then, we obtain the desired $\partial$-compressing disk for $A_1$ by gluing $E_1$ and $|\partial E_1\cap A_k|$-copies of $E_k$ along the essential arcs of $\partial E_1\cap A_k$.
\end{proof}

By Lemma \ref{essential arcs}, we have two $\partial$-compressing disks $E_1$ and $E_2$ for $A_1$ and $A_2$ respectively such that $E_j\cap A_i$ consists of essential arcs for $j=1,2$ and $i=3,\ldots n$, and these two disks can be isotoped so that their interiors do not intersect $T^{\pm}$ since $T^{\pm}$ consists of mutually parallel essential arcs or inessential arcs in $A_i$.
This shows that an $n$-bridge decomposition $(B^+,T^+)\cup_{S}(B^-,T^-)$ of $K$ is stabilized.

\subsection{The case that $|F\cap S|=2$}

Next we consider the case that $|F\cap S|=2$.
In this case, $F\cap B^+$ consists of a single annulus $A_1$ and $F\cap B^-$ consists of a single annulus $A_2$ both of which are 1-bridge with respect to $h$.
Therefore the annulus $A_1$ separates $B^+$ into a 3-ball $B_1$ and a solid torus $V_1$, and $A_2$ separates $B^-$ into a 3-ball $B_2$ and a solid torus $V_2$.
It follows that $A_i$ is parallel to $A_i'$ in $V_i$ $(i=1,2)$.
We have $\partial A_1=\partial A_2=C_1\cup C_2$, and $C_1$ and $C_2$ bound disks $D_1$ and $D_2$ in $S$ with $D_1\cap D_2=\emptyset$.

If there exists a string $t_i$ of $T^+$ which is inessential in $A_1$ without loss of generality, then an $n$-bridge decomposition $(B^+,T^+)\cup_{S}(B^-,T^-)$ of $K$ is stabilized.
Indeed, $t_i$ and an arc $t_i'$ in $D_i$ with $\partial t_i=\partial t_i'$ cobound a disk $e_i$ in $B_1$, and an adjacent string $t_{i+1}$ of $T^-$ and an arc $t_{i+1}'$ in $A_2'$ with $\partial t_{i+1}=\partial t_{i+1}'$ cobound a disk $e_{i+1}$ in $V_2$ since $A_2$ is parallel to $A_2'$ in $V_2$.

Otherwise, $T^+$ consists of essential and hence mutually parallel arcs in $A_1$ and $T^-$ also consists of essential and hence mutually parallel arcs in $A_2$.
Since an $n$-bridge position $(B^+,T^+)\cup_{S}(B^-,T^-)$ of $K$ is non-minimal, $T^{\pm}$ consists of $q$-arcs.
In this case, an $n$-bridge decomposition $(B^+,T^+)\cup_{S}(B^-,T^-)$ of $K$ is stabilized as follows.

If we project $T^{\pm}$ into $A_1'=A_2'$ so that it has minimal crossings up to isotopy of $T^{\pm}$ in $A_1'=A_2'$, then we have a $q$-bridge presentation of a $(p,q)$-torus knot.
Then there exists an isotopy of $T^+$ in $B^+$ such that $(q-p)$-pairs of adjacent over/under bridges $(t_i,t_{i+1})$ have no crossing.
Hence we obtain $(q-p)$-destabilizations for $(B^+,T^+)\cup_{S}(B^-,T^-)$.
Furthermore, there exists an isotopy of $T^-$ in $B^-$ which gives a $p$-bridge presentation of a $(q,p)$-torus knot.
For an example on $p=3$ and $q=5$, see Figure \ref{torus knot}.

\begin{figure}[htbp]
	\begin{center}
	\includegraphics[trim=0mm 0mm 0mm 0mm, width=.5\linewidth]{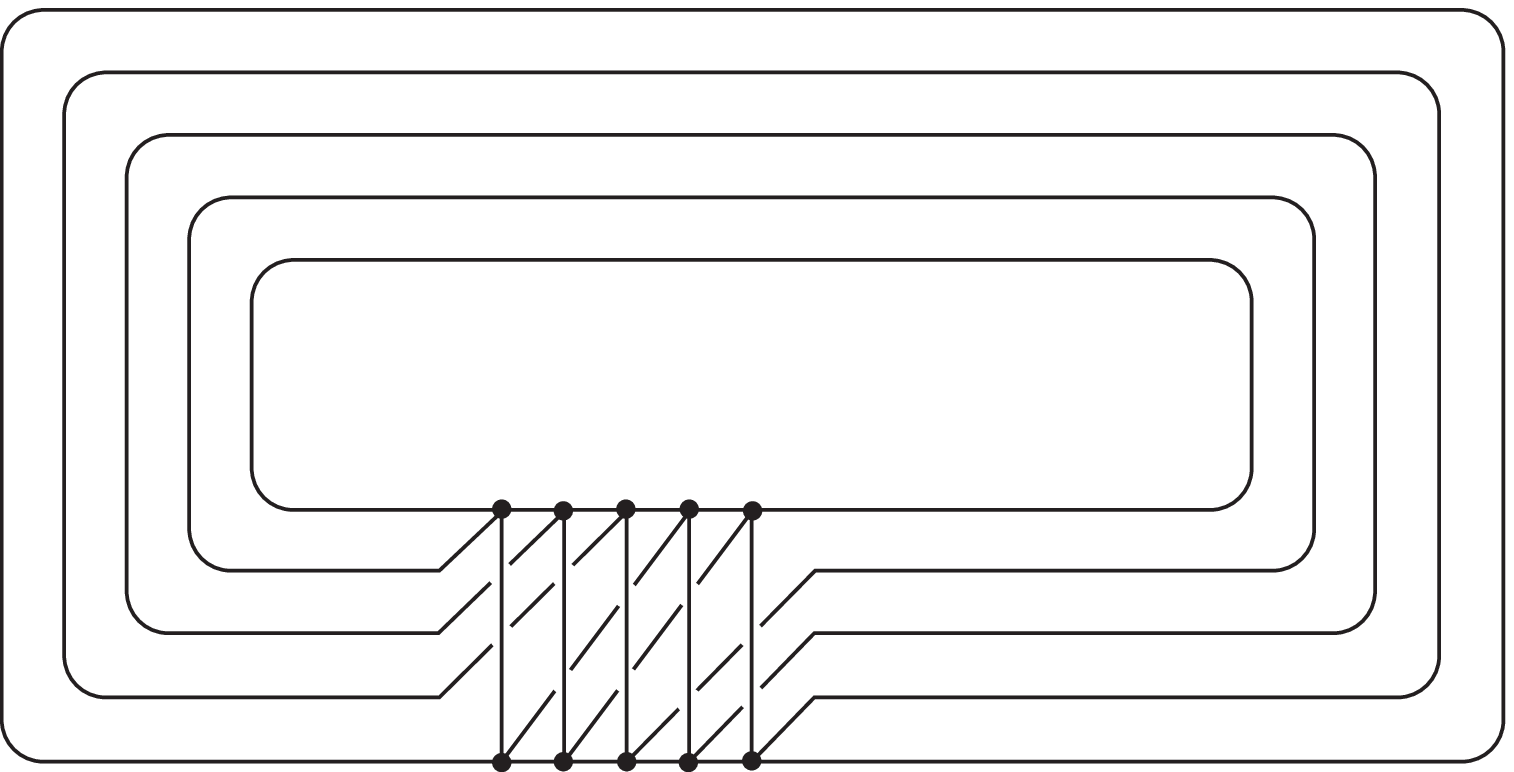}
	\end{center}
	\begin{center}
	\includegraphics[trim=0mm 0mm 0mm 0mm, width=.5\linewidth]{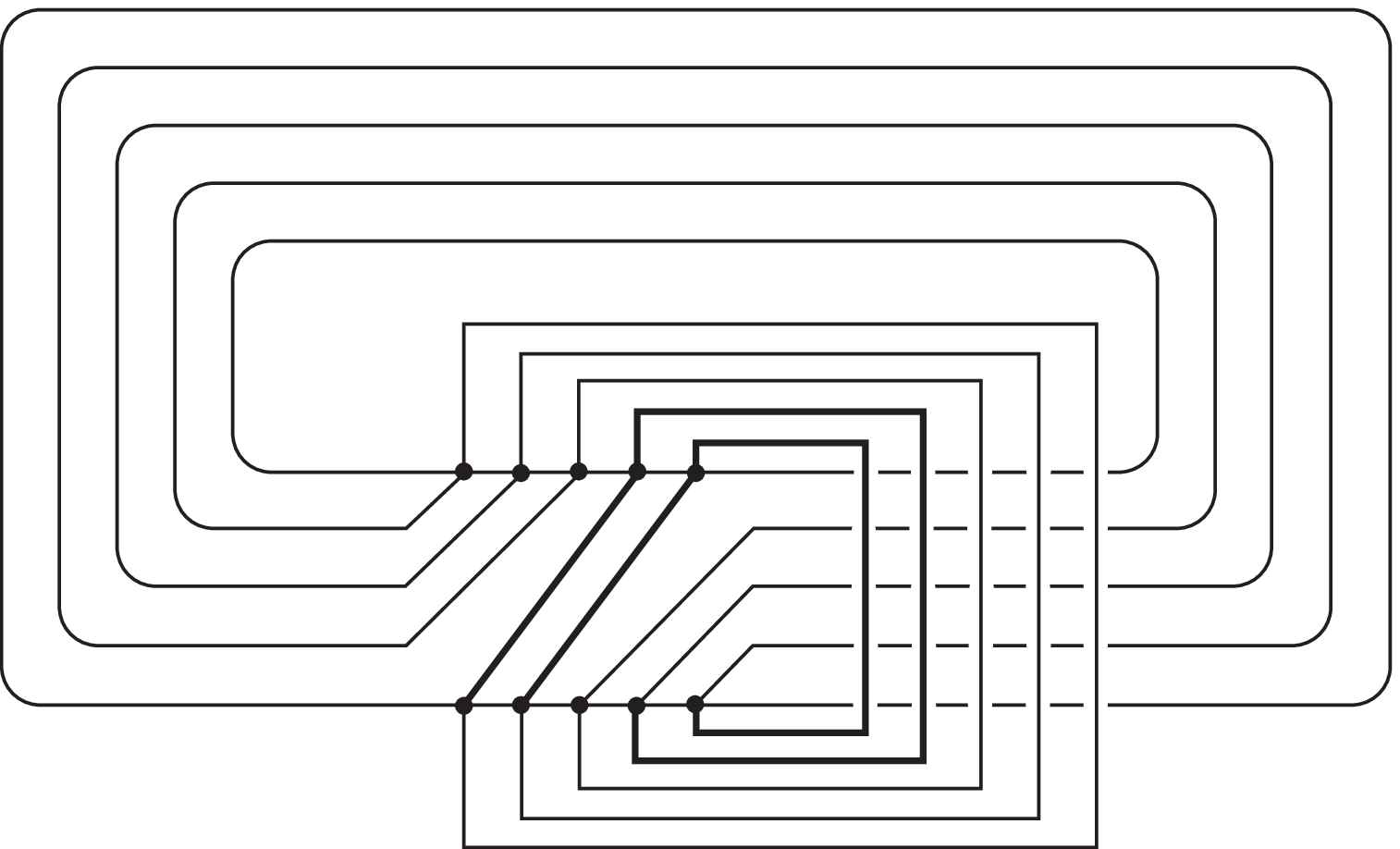}
	\end{center}
	\begin{center}
	\includegraphics[trim=0mm 0mm 0mm 0mm, width=.5\linewidth]{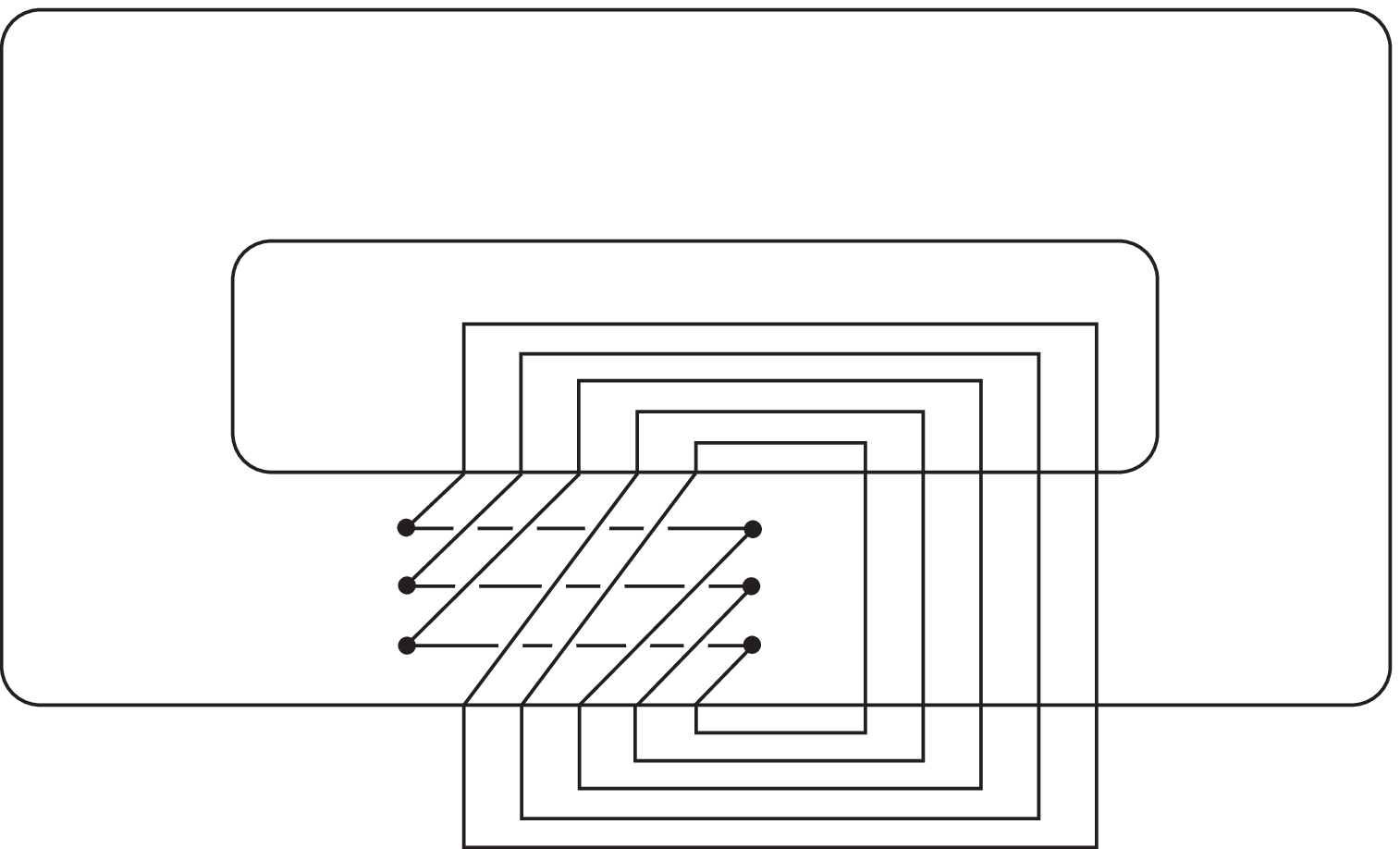}
	\end{center}
	\caption{A deformation from a $(3,5)$-torus knot to a $(5,3)$-torus knot}
	\label{torus knot}
\end{figure}

This completes the proof of Theorem \ref{torus} and shows that minimal bridge decompositions of a torus knot are unique.
\end{proof}

\bigskip


\bibliographystyle{amsplain}

\end{document}